\newtheorem{thm}{Theorem}[section]
\newtheorem*{mainthm}{Main Theorem}
\newtheorem{lem}[thm]{Lemma}
\newtheorem{prop}[thm]{Proposition}
\theoremstyle{remark}
\newtheorem{rem}{Remark}[section]
\theoremstyle{definition}
\numberwithin{equation}{section}
\numberwithin{figure}{section}
\font\nt=cmr7
\def\note#1
\newcommand{\dist}{\operatorname{dist}}
\newcommand{\supp}{\operatorname{supp}}
\newcommand{\id}{\operatorname{id}}
\newcommand{\transverse}
 {\kern .7em\makebox[0pt][c]{\raisebox{.2ex}{$|$}}\kern -.6em\cap}
\newcommand{\tangent}
 {\kern .7em\makebox[0pt][c]{\raisebox{.77ex}{$--$}}\kern -.6em\cap}
\newcommand{\OO}{{\mathcal O}}
\def\BPhi{{\boldsymbol{\BPhi}}}
\def\B0{{\mathbf{0}}}
\newcommand{\Dom}{\operatorname{Dom}}
\def\Empty{}
\newcommand\oplabel[1]{
  \def\OpArg{#1} \ifx \OpArg\Empty {} \else
  	\label{#1}
  \fi}
\newcommand{\comm}[1]{}
\newcommand{\comment}[1]{}
\newcommand{\abs}[1]{\lvert#1\rvert}
\DeclarePairedDelimiter{\floor}{\lfloor}{\rfloor}
\newcommand\opR{\mathcal{R}}
\begin{document}

\title[Physical Measures ]{Physical Measures for Infinitely Renormalizable Lorenz Maps }
\author{M. Martens, B. Winckler}



\date{\today}

\begin{abstract} A physical measure on the attractor of a system describes the statistical behavior of typical orbits. An  example occurs in unimodal dynamics. Namely,  all infinitely renormalizable unimodal maps  have a physical measure. For Lorenz dynamics, even in the simple case of infinitely renormalizable systems, the existence of physical measures is more delicate. In this article we construct examples of infinitely renormalizable Lorenz maps which do not have a physical measure. A priori bounds on the geometry play a crucial role in (unimodal) dynamics. There are infinitely renormalizable Lorenz maps which do not have a priori bounds. This phenomenon is related to the position of the critical point of the consecutive renormalizations. The crucial technical ingredient used to obtain these examples without a physical measure, is the control of the position of these critical points.
\end{abstract}

\maketitle

\setcounter{tocdepth}{1}
\tableofcontents

\section{Introduction}\label{intro}

Sometimes the study of a flow can be reduced to the study of a one-dimensional map. The famous examples are the Lorenz maps. These interval maps are used to understand the dynamics of the flow introduced by E.~N.\ Lorenz \cite{Lo}. Indeed, this reduction is not always straightforward but in the case of the Lorenz flow it was shown to be valid \cite{T}. There is an extensive literature on Lorenz dynamics. As a brief introduction see for example \cite{V} or \cite{W}  and the references therein.

Although systems like Lorenz maps have a wide range of applications,
their theory is not well developed. As for unimodal maps, our topological understanding of Lorenz maps is complete. Unfortunately, most of the tools used to develop the geometric theory for unimodal maps could not be applied in the context of Lorenz maps. There are intrinsic obstructions to do so.

The main obstruction is the critical exponent of Lorenz maps. A Lorenz map is characterized by having a discontinuity at the critical point.  Near the critical point, the derivative tends to zero according to a power law. The exponent of this law is related to eigenvalues of a singularity in the original flow and can have any value,  not necessarily an integer. A crucial moment in the development of the unimodal theory was to consider unimodal maps which have holomorphic extensions. The same starting point can not be used when studying Lorenz maps because one has to consider critical exponents which are not necessarily an integer.

A crucial tool in unimodal and Lorenz dynamics is renormalization.  It was introduced to give  a very precise geometrical understanding of the dynamics \cite{CT,F}. Renormalization has also been  used to give a topological description of the dynamics.
A central part of any renormalization theory are the a priori bounds which state that the consecutive renormalizations form a precompact sequence of systems. Here arises a problem in Lorenz dynamics not present in the unimodal context. The renormalizations of Lorenz maps are also Lorenz maps. The critical point of these renormalizations might tend to the boundary of the domain of the system. There are examples where this phenomenon occurs. In such a case one does not have a priori bounds. The control of the position of the critical point of the renormalizations is the main difficulty which one encounters when studying Lorenz dynamics.

The third difficulty with Lorenz dynamics is the smoothness. Lorenz maps are obtained from invariant foliations associated with the original flow. Although the flow is smooth the invariant foliations generally have a very low degree of smoothness and hence the same may hold for Lorenz maps which occur in applications. The renormalization theory for unimodal maps applies to $\mathcal{C}^3$ maps. Indeed, unimodal map which are only $\mathcal{C}^2$ can have very uncontrolled geometry \cite{CMMT}. We will not address this problem.

The main reason to study Lorenz dynamics is their relevance in the broader study of flows. Another reason is the need to develop techniques which are able to deal with the specific challenges of Lorenz dynamics, techniques which might be applied beyond one-dimensional dynamics. The main result presented here is an example of a Lorenz map which displays ergodic behavior not present in unimodal dynamics. This is one example of a phenomenon which occurs in Lorenz dynamics not but not in smooth one-dimensional dynamics. Indeed, this is an invitation to look for other Lorenz surprises.

The simplest non-trivial dynamics occurs when the system is infinitely renormalizable. In the unimodal case such infinitely renormalizable systems are ergodic and have a minimal Cantor attractor. Almost every orbit converges to this invariant Cantor set. The Lebesgue measure of the attractor is zero \cite{M1}. Moreover, the Cantor set carries a unique invariant measure. It is the so-called physical measure. Asymptotically, typical orbits are distributed according to this measure.

\begin{mainthm}
Every monotone Lorenz family contains infinitely renormalizable maps which do not have a physical measure. These ergodic maps do have a minimal Cantor attractor of Lebesgue measure zero.
\end{mainthm}

Indeed, there are unimodal maps without physical measures, compare \cite{J,HK}. However, the dynamics of these maps is not at all as simple as the dynamics of infinitely renormalizable maps.

The general ingredients needed to construct these examples are collected in section \S \ref{prelim}. In particular, the construction of invariant measures on minimal Cantor sets is discussed. The examples presented in the Main Theorem have minimal Cantor sets which are clearly not uniquely ergodic,  they carry exactly two invariant measures.

The position of the critical point of the renormalizations is discussed in section \S\ref{critpts}. The combinatorial type of the renormalizations is unbounded. Without special care unbounded renormalization types might lead to unbounded geometry, no a priori bounds. In this section the estimates are prepared to find the delicate balance between unbounded renormalization type, needed to obtain systems without physical measures, and controlled position of the critical points, needed to control the geometrical properties of the system.

The actual examples are constructed in section \S \ref{construc}.
The idea is that given that we have two ergodic measures on the attractor we can force many points to spend a long time following one measure.  We then force them to wander over to the other measure and spend an even longer time following it.  The process is then repeated.

\bigskip
\noindent
{\bf Acknowledgement.} The examples were constructed during visits of the authors to KTH, Institut Mittag-Leffler and Stony Brook University. The authors would like to thank these institutes for their kind hospitality.

\newcommand\dfn[1]{\emph{#1}}

\section{Invariant Measures and Preliminaries}\label{prelim}

\subsection{Lorenz maps}

The \dfn{standard Lorenz map} $q: [0,1]\setminus\{c\} \to [0,1]$
with \dfn{critical point} $c \in (0,1)$,  \dfn{critical exponent} $\alpha > 1$,
and \dfn{critical values} $u, 1-v\in [0,1]$, is defined by
\begin{equation} \label{stdfamily}
    q(x) = \begin{cases}
        u\cdot\left( 1 - \left( \frac{c-x}{c} \right)^\alpha \right),
        & x \in [0,c), \\
        1 + v\cdot\left( -1 + \left( \frac{x-c}{1-c} \right)^\alpha \right),
        & x \in (c,1].
    \end{cases}
\end{equation}

A \dfn{Lorenz map} on $[0,1]$ is any map $f:[0,1]\setminus\{c\} \to [0,1]$ of
the form
\[
    f(x) = \begin{cases}
        \phi_- \circ q(x), & x \in [0,c), \\
        \phi_+ \circ q(x), & x \in (c,1], \\
    \end{cases}
\]
where $\phi_-$ and $\phi_+$ are increasing diffeomorphisms on $[0,1]$ which we
call the \dfn{diffeomorphic parts} of~$f$.  See Figure~\ref{fig:lorenzmap} for
an illustration of a Lorenz map.

The critical point is always denoted by $c$, or $\text{crit}(f)$ if we wish to
emphasize the map we are talking about.  The branches of $f$ are denoted
$f_-:[0,c]\to[0,1]$ and $f_+:[c,1]\to[0,1]$, where we define $f_-(c) =
\lim_{x\uparrow c} f(x)$ and $f_+(c) = \lim_{x\downarrow c} f(x)$.  Note that
$f_-(c)$ and $f_+(c)$ are the critical values of~$f$.  We say that $f_-$ is a
\dfn{full branch} if it is onto and we say that $f_-$ is a \dfn{trivial branch}
if $f_-([0,c])=[0,c]$.  Similarly, $f_+$ is full if it is onto and $f_+$ is
trivial if $f_+([c,1])=[c,1]$.  A Lorenz map is said to be \dfn{nontrivial} if
the images of both branches contain the critical point in their interior, i.e.\ if $f_+(c) < c <
f_-(c)$, and it is said to be \dfn{full} if both branches are full, i.e.\ if
$f_-(c)=1$ and $f_+(c)=0$.

We make the following assumptions on all Lorenz maps $f$ throughout this
article unless otherwise stated:
\begin{enumerate}
    \item the critical exponent $\alpha>1$ is fixed once and for all (and we
        allow noninteger $\alpha$),
    \item the diffeomorphic parts of $f$ are
        $\mathcal{C}^3$--diffeomorphisms with negative Schwarzian derivate,
    \item $f$ is nontrivial,
    \item the only fixed points of $f$ are $0$ and~$1$.\footnote{If there were
        other fixed points we could rescale $f$ to the smallest interval
        containing $c$ and exactly two fixed points.}
\end{enumerate}

\subsection{Families of maps} \label{families}

A \dfn{family of Lorenz maps} is a set of Lorenz maps parametrized by a compact
and simply connected subset $U \subset \mathbb{R}^2$.  The map $U \ni \lambda
\mapsto F_\lambda$ is required to be at least continuous, but in some places we
will need more smoothness.
We say that $U \ni \lambda \mapsto F_\lambda$ is a \dfn{full family} if it
realizes all possible combinatorics (see~\cite{MM}).  We shall at times need
families to satisfy one or more of the following properties:
\begin{enumerate}[label=(F\arabic*)]
    \item $(\lambda_1,\lambda_2) \mapsto F_\lambda(x)$ is a monotone function
        in $\lambda_1$ and in $\lambda_2$, for every $x$, \label{F1}
    \item there exists a unique $\hat\lambda \in U$, called the
        \dfn{full vertex}, such that both branches of $F_{\hat\lambda}$ are
        full (see Figure~\ref{fig:lorenzmap}), \label{F2}
    \item there exists a unique $\check\lambda \in U$, called the
        \dfn{trivial vertex}, such that both branches of $F_{\check\lambda}$
        are trivial, \label{F3}
    \item the fixed points of $F_\lambda$ (i.e.\ $0$ and $1$) are hyperbolic
        repellers for all $\lambda \in U$. \label{F4}
\end{enumerate}
Note that we do allow maps with trivial branches on the boundary of~$U$ even
though such maps are not nontrivial.  All other maps in a family are required
to be nontrivial.

A family which satisfies all of the properties \ref{F1}--\ref{F4} is called a
\dfn{monotone family}.  The following theorem is taken from \cite{MM}:

\begin{thm} \label{monotonefull}
    Every monotone family is a full family.
\end{thm}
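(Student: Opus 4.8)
The plan is to prove surjectivity of the combinatorics map by a two-dimensional intermediate value argument on $U$, using the monotonicity hypothesis \ref{F1} as the engine and the two vertices from \ref{F2}--\ref{F3} as anchors.

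First I would fix the combinatorial bookkeeping, following \cite{MM}. To a nontrivial Lorenz map $f$ one attaches its \emph{kneading data}: the itineraries $\underline{e}(f)$ and $\underline{w}(f)$ of the critical values $f_-(c)$ and $f_+(c)$ with respect to the partition $\{[0,c),(c,1]\}$, where each symbol is $-$, $+$, or a terminal symbol $\ast$ recording that the orbit has hit $c$. The combinatorial type of $f$ is the pair $(\underline{e}(f),\underline{w}(f))$; the \emph{admissible} pairs are those satisfying the usual shift-compatibility inequalities, and ``realizing all combinatorics'' means that the map $\kappa\colon U\to\{\text{admissible pairs}\}$, $\lambda\mapsto(\underline{e}(F_\lambda),\underline{w}(F_\lambda))$, is onto. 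It is convenient to work with finite truncations $\kappa_n$, the pair of length-$n$ prefixes: the level sets $V_n(\omega)=\{\lambda\in U:\kappa_n(F_\lambda)=\kappa_n(\omega)\}$ are closed, they are nested along any chain of prefixes of a fixed admissible $\omega$, and $\bigcap_n V_n(\omega)$ is precisely the parameter set realizing $\omega$; hence by compactness of $U$ it suffices to show each $V_n(\omega)$ is nonempty.

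Next comes the monotonicity step. After reorienting the parameters I may assume that $\lambda_1\mapsto F_\lambda(x)$ is nondecreasing and $\lambda_2\mapsto F_\lambda(x)$ is nonincreasing for every $x$; one then checks that enlarging a Lorenz map pointwise moves the orbit of each critical value, and hence each kneading sequence, monotonically for the lexicographic order attached to the $\{-,+\}$ partition. In particular $\underline{e}(F_\lambda)$ and $\underline{w}(F_\lambda)$, suitably signed, are monotone along the coordinate directions of $U$, the full vertex $\hat\lambda$ realizes the extremal admissible pair $(\overline{+},\overline{-})$, and the trivial vertex $\check\lambda$ realizes the opposite extreme. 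Hypothesis \ref{F4} enters here: since $0$ and $1$ are hyperbolic repellers for every $\lambda$, no critical orbit is trapped at an endpoint for a spurious reason, so the kneading data genuinely varies in the one-sided-continuous, monotone fashion that the argument needs.

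With this in hand, fix an admissible target $\omega$ and a level $n$. The requirement $\kappa_n(F_\lambda)=\kappa_n(\omega)$ splits as a conjunction of two one-sided conditions, one on $\underline{e}$ and one on $\underline{w}$; by the monotonicity step each of them cuts out a monotone region of $U$ whose trace on the two pairs of opposite faces of $U$ is pinned down by comparison with $\hat\lambda$ and $\check\lambda$. Transporting $U$ to a square (legitimate because $U$ is compact and simply connected) and applying the Poincar\'e--Miranda theorem, the two-dimensional intermediate value theorem, then yields $\lambda\in U$ with $\kappa_n(F_\lambda)=\kappa_n(\omega)$, so $V_n(\omega)\neq\varnothing$; letting $n\to\infty$ and extracting a convergent subsequence from the nested closed sets $V_n(\omega)$ produces a parameter realizing $\omega$ itself. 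I expect the main obstacle to be precisely the monotonicity step together with the attendant continuity bookkeeping: kneading sequences are only one-sidedly continuous in $\lambda$, the admissible pairs form a non-Hausdorff combinatorial space, and the \emph{two} critical orbits must be controlled simultaneously, so the two lexicographic orders have to be oriented compatibly and the faces of $U$ identified correctly before the 2D intermediate value theorem can be applied. Pinning the extremal combinatorics at $\hat\lambda$ and $\check\lambda$, and ruling out extra parameters where an itinerary jumps past the target, is where all four of \ref{F1}--\ref{F4} get used.
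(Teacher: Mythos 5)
The paper does not prove this theorem; it is stated with a citation to Martens--de Melo~\cite{MM}, so there is no in-paper argument to compare against. Evaluating your sketch on its own terms, the monotonicity of kneading data and the compactness argument passing from finite prefixes to infinite itineraries are sound, but the Poincar\'e--Miranda step contains a gap that you flag (\lq\lq the two lexicographic orders have to be oriented compatibly\rq\rq) but do not close, and closing it is the entire content of the theorem. After reorienting so that $F_\lambda$ is, say, pointwise nondecreasing in $\lambda_1$ and nonincreasing in $\lambda_2$, \emph{both} critical values increase with $\lambda_1$ and decrease with $\lambda_2$; since the branches are orientation-preserving, it follows that \emph{both} $\underline e$ and $\underline w$ are nondecreasing in $\lambda_1$ and nonincreasing in $\lambda_2$. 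Poincar\'e--Miranda needs the two defect functions to change sign across \emph{different} pairs of opposite faces; with both kneadings monotone in the same pattern, the two sign conditions get pinned on the same diagonal pair of corners, and the hypotheses of the theorem are simply not available for an arbitrary admissible target. Hypothesis \ref{F1} by itself is compatible with $\underline e$ and $\underline w$ being, to leading order, functions of a single linear combination of $\lambda_1,\lambda_2$, in which case their level sets run parallel and no 2D intermediate value argument produces an intersection. For the standard family the argument is rescued by the special fact that along each boundary face one of the critical values sits exactly at $0$, $1$, or $c$, forcing one kneading sequence to be identically extremal there, but the abstract axioms \ref{F1}--\ref{F4} do not hand you this.

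What is missing is exactly what \cite{MM} supplies and what the present paper re-exports as the archipelago/island structure (\lq\lq every archipelago of a monotone family contains a full island\rq\rq). There the realization is proved inductively: for each finite combinatorial type the realizing parameter set is shown to be a nondegenerate island, each island contains a full sub-island on which the renormalization is again a monotone family, and iterating yields a nested sequence of nonempty compacta converging onto any prescribed combinatorics. To make your route work you would have to extract, from the uniqueness of the full and trivial vertices in \ref{F2}--\ref{F3} together with \ref{F4}, the transversality (or degree-one) property of $\lambda\mapsto(\underline e,\underline w)$ that the monotonicity axiom alone does not provide --- and at that point you are in effect reproving the base case and inductive step of the island machinery rather than bypassing it.
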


Given a fixed $c \in (0,1)$ we define the \dfn{standard Lorenz family} $Q$ by
$(u,v) \mapsto Q_{u,v,c}$, where $Q_{u,v,c}=q$ is given by \eqref{stdfamily}.
Note that the standard family is a monotone family.  All other
families $F$ we consider are \dfn{close to standard maps}, by which we
mean that the diffeomorphic parts of~$F_\lambda$ are close to identity in the
$C^2$--norm, for every $\lambda \in U$. If the diffeomorphic parts of a Lorenz
map $f$ have $C^2$--distance to identity bounded by $\epsilon>0$, we say that
$f$ is {\it $\epsilon$--close to standard maps}.

The standard family is a bit
special in that the critical point does not vary across the family, whereas for
general families the critical point depends on $\lambda \in U$.
Note that we in general have no control over the behavior of the critical point
in terms of~$\lambda$ for the families we study.  To overcome this problem we
need to assume that our families are analytic  (e.g.\ in
Lemma~\ref{fam depend}).

\subsection{Renormalization}

A Lorenz map $f$ is \dfn{renormalizable} if there exists a closed interval
$C \subsetneq [0,1]$ such that the first-return map to~$C$ is
a nontrivial Lorenz map on~$C$.  The \dfn{renormalization operator}
$\opR$ is defined by taking the largest such $C$ which properly contains the
critical point and sending $f$ to its first-return map on $C$, affinely
rescaled to~$[0,1]$.  We call $\opR f$ the \dfn{renormalization} of~$f$.

Let $f$ be renormalizable with return interval $C \ni c$ and consider the
orbits of $C^- = C \cap [0,c)$ and $C^+ = C \cap (c,1]$.  By definition there
exist minimal $a,b\geq1$ such that $f^{a+1}(C^-)$ and $f^{b+1}(C^+)$ are contained in
$C$.  Since the first-return map is again a Lorenz map (on~$C$) it follows that
the left and right boundary points of~$C$ are periodic points (of period $a+1$
and~$b+1$, respectively), and since $\opR f$ is nontrivial
$C^- \subset f^{a+1}(C^-) \subset C$ and
$C^+ \subset f^{b+1}(C^+) \subset C$.
If the forward orbits of $f(C^-)$ and $f(C^+)$ stay to the right and to the
left of the critical point respectively before returning to~$C$, then $f$ is
said to be of \dfn{monotone type} and we say that $f$ is
\dfn{$(a,b)$--renormalizable}, see Figure~\ref{fig:lorenzmap}.  Throughout the
rest of this article we will only consider renormalizations of monotone types
(there are other types but they are more difficult to analyze).

\begin{figure}
  \begin{center}
    \includegraphics{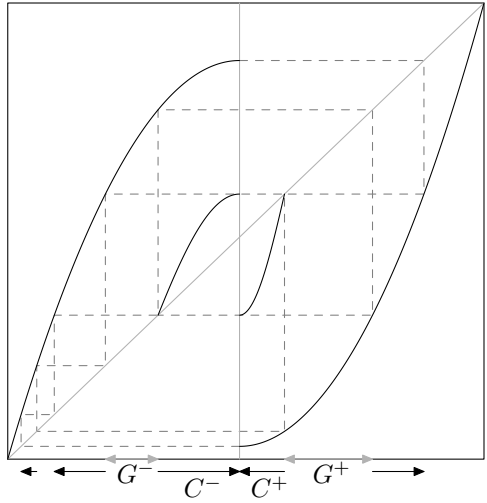}
  \end{center}
  \caption[A Lorenz map]{A Lorenz map and its first-return map to the return
      interval $C = C^- \cup C^+$.  The
      map is renormalizable since the first-return map is
      a Lorenz map on~$C$.  In fact, it is
      $(1,2)$--renormalizable since $C^-$ maps
      to the right of the critical point then returns, and
      $C^+$ maps to the left of the critical point, then remains on
      the left for one more step before it returns.  Furthermore, it
      corresponds to a
      full vertex in some family since the first-return map is onto on~$C$.}
      \label{fig:lorenzmap}
\end{figure}

We say that $f$ is \dfn{infinitely renormalizable} if it can be repeatedly
renormalized infinitely many times.  An infinitely renormalizable map is said
to be of \dfn{combinatorial type} $\{(a_n,b_n)\}_{n=1}^\infty$ if $\opR^{n-1}
f$ is $(a_n,b_n)$--renormalizable, for all $n\geq1$.

Let $U\ni\lambda \mapsto F_\lambda$ be a family of Lorenz maps.  The set of
$\lambda\in U$ such that $F_\lambda$ is $(a,b)$--renormalizable is called the
\dfn{$(a,b)$--archipelago} of~$F$ and we denote it by $A_{a,b}$.  An
\dfn{$(a,b)$--island} is defined as the closure of a connected component of the
interior of~$A_{a,b}$.  We will talk about ``islands'' and ``archipelagos''
when the return times $(a,b)$ are irrelevant.  We say that $D_{a,b} \subset
A_{a,b}$ is a \dfn{full island} if $D_{a,b}$ is an island and
$D_{a,b}\ni\lambda \mapsto \opR F_\lambda$ is a full family.  The following
theorem is taken from \cite{MM}:

\begin{thm}
    Every archipelago of a monotone family contains a full island.
\end{thm}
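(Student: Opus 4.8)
The plan is to single out one island $D=D_{a,b}\subset A_{a,b}$ on which the renormalized family $D\ni\lambda\mapsto\opR F_\lambda$ is again a monotone family in the sense of \ref{F1}--\ref{F4}, and then to invoke Theorem~\ref{monotonefull} to conclude that it is a full family --- which is exactly the statement that $D$ is a full island. (Alternatively one can try to show directly that $\opR|_D$ realizes every combinatorial type.) To start, $A_{a,b}\neq\emptyset$: since $F$ is full by Theorem~\ref{monotonefull} it realizes the combinatorics of an $(a,b)$-renormalizable map whose renormalization is the full map on $[0,1]$, so there is $\lambda_*\in U$ with $\opR F_{\lambda_*}$ \emph{full}. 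Being $(a,b)$-renormalizable is an open condition --- the hyperbolic periodic points at the ends of the return interval $C$ persist under perturbation and so does the return combinatorics --- hence $\lambda_*\in\inter A_{a,b}$; let $D$ be the closure of the component of $\inter A_{a,b}$ containing $\lambda_*$.

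Next I would describe $\partial D$ in terms of the ways $\opR F_\lambda$ can degenerate. Write $c_\lambda\in(0,1)$ for the critical point of $\opR F_\lambda$ and $\ell(\lambda)=(\opR F_\lambda)_-(c_\lambda)$, $r(\lambda)=(\opR F_\lambda)_+(c_\lambda)$ for its critical values, so $r(\lambda)<c_\lambda<\ell(\lambda)$ on $\inter A_{a,b}$. The loci $\{\ell=c_\lambda\}$ and $\{r=c_\lambda\}$, where the left (resp.\ right) branch of $\opR F_\lambda$ becomes trivial, lie on $\partial A_{a,b}$ since there $\opR F_\lambda$ ceases to be nontrivial; the loci $\{\ell=1\}$ and $\{r=0\}$, where the left (resp.\ right) branch becomes full, lie in $\inter A_{a,b}$ and pass through $\lambda_*$. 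Using the monotonicity \ref{F1} of $F$ I would show that moving $\lambda$ in one coordinate direction moves $\ell(\lambda)$ monotonically and moving it in the other moves $r(\lambda)$ monotonically; hence, after possibly passing to a sub-island (a topological ``square''), $D$ is bounded by an arc of $\{\ell=c_\lambda\}$, an arc of $\{r=c_\lambda\}$, and two arcs through $\lambda_*$, with a single ``trivial corner'' where $\ell=c_\lambda=r$. This supplies the candidate full vertex $\lambda_*$ and trivial vertex of $\opR|_D$, i.e.\ \ref{F2} and \ref{F3}.

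The heart of the argument is \ref{F1} for $\lambda\mapsto\opR F_\lambda$. On $\inter A_{a,b}$ the return times $(a,b)$ are constant and the boundary points $p^-_\lambda<c<p^+_\lambda$ of the return interval $C(\lambda)$ are periodic points whose orbits avoid a neighbourhood of $c$; the return dynamics forces these orbits to be repelling, and with the negative Schwarzian hypothesis and \ref{F4} they are uniformly hyperbolic over compact pieces of $D$, so $p^\pm_\lambda$ depend continuously --- and, I claim, monotonically --- on $\lambda$. Writing $\opR F_\lambda=\iota_\lambda\circ F_\lambda^{\,k}\circ\iota_\lambda^{-1}$, where $\iota_\lambda\colon C(\lambda)\to[0,1]$ is the affine rescaling and $k\in\{a+1,b+1\}$ is the (piecewise constant) return time, one combines the pointwise monotonicity of $F_\lambda$ in each coordinate with the monotone motion of $p^\pm_\lambda$ to deduce that $\ell(\lambda)$, $r(\lambda)$, and more generally $\opR F_\lambda(x)$ for fixed $x$, are monotone in the relevant coordinate. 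Then \ref{F2} and \ref{F3} follow from the boundary picture above and \ref{F4} holds because the fixed points $0,1$ of $\opR F_\lambda$ are images under $\iota_\lambda$ of points of the uniformly hyperbolic repelling periodic orbits of $p^-_\lambda$ and $p^+_\lambda$. Applying Theorem~\ref{monotonefull} to $\opR|_D$ now shows it is a full family, so $D$ is a full island.

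The main obstacle is precisely this monotonicity step: both the rescaling $\iota_\lambda$ and the critical point $c_\lambda=\iota_\lambda(c)$ move with $\lambda$, so a priori $\iota_\lambda\circ F_\lambda^{\,k}\circ\iota_\lambda^{-1}$ need not inherit the monotonicity of $F_\lambda$. Making this rigorous calls for coordinates on $U$ adapted to the two periodic boundary orbits --- one coordinate controlling $p^-_\lambda$ and the orbit of $C^-$, the other controlling $p^+_\lambda$ and the orbit of $C^+$ --- followed by a careful sign chase through the $k$ iterations of $F_\lambda$. A secondary point is to check that $D$, or a sub-square of it, is compact and simply connected as required of the parameter domain of a monotone family, and that the component of $\inter A_{a,b}$ through $\lambda_*$ really abuts the trivial loci as claimed. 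A convenient alternative that avoids some of this topology is purely combinatorial: there is a substitution $\Theta_{a,b}$ on combinatorial types with $F_\lambda$ realizing $\Theta_{a,b}(\tau)$ exactly when $F_\lambda$ is $(a,b)$-renormalizable with $\opR F_\lambda$ of type $\tau$, and fullness of $F$ together with the monotone dependence of combinatorics on $\lambda$ then isolates a single island on which every $\tau$ occurs.
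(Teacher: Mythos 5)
The paper does not give a proof of this theorem---it is quoted from \cite{MM}---so there is no internal argument to compare against, and I will assess your proposal on its own terms. The strategy of showing $\opR|_D$ is itself a monotone family and then citing Theorem~\ref{monotonefull} founders exactly where you say, and the gap is not a routine ``sign chase.'' In $\opR F_\lambda=\iota_\lambda\circ F_\lambda^{k}\circ\iota_\lambda^{-1}$, both the affine chart $\iota_\lambda$ and the critical point of $\opR F_\lambda$ move with $\lambda$, so $F_\lambda^{k}$ is evaluated at a $\lambda$-dependent base point; moreover the periodic boundary points $p^{\pm}_\lambda$ of $C(\lambda)$ enter $\iota_\lambda$ and $\iota_\lambda^{-1}$, and there is no a priori reason they move with a sign compatible with \ref{F1}. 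Put differently, \ref{F1} is a ``vertical'' condition (move the graph of $F_\lambda$ up and down), whereas renormalization also performs a $\lambda$-dependent ``horizontal'' change of coordinates, and that combination need not be monotone in $\lambda$. If renormalization did automatically preserve the monotone-family property, the full-island theorem would be a one-line corollary of Theorem~\ref{monotonefull}; the fact that \cite{MM} treat it as a separate result is strong evidence that some further topological input is needed rather than a re-verification of \ref{F1}--\ref{F4} for $\opR|_D$.

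There is also a smaller misstep at the outset. The parameter $\lambda_*$ for which $\opR F_{\lambda_*}$ is full lies on $\partial A_{a,b}$, not in its interior: when $F_{\lambda_*}^{a+1}(c^-)$ coincides with the right endpoint of $C$, a perturbation in the unfavourable direction pushes the critical orbit just past $C$---the hyperbolic boundary periodic points persist, but the first return of $c^-$ no longer occurs at time $a+1$---so $(a,b)$-renormalizability is destroyed. Hence ``the component of $\inter A_{a,b}$ containing $\lambda_*$'' is not defined; one must instead show that some component has $\lambda_*$ in its closure, which is itself part of what has to be proved. Your closing alternative, pulling back the full set of renormalized combinatorics through the substitution $\Theta_{a,b}$ and exploiting monotone dependence of kneading data on $\lambda$, is the more promising route and sidesteps \ref{F1} for $\opR|_D$; but as sketched it does not yet confine all realized renormalized types to a \emph{single} island, and that confinement is precisely the topological content of the theorem.
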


\subsection{Covers}

Let $f$ be an infinitely renormalizable map.  There exists a nested sequence
of intervals $C_1 \supset C_2 \supset \cdots$ (all containing the
critical point) on which the corresponding first-return map is again a
Lorenz map.
The critical point splits each $C_n$ into two subintervals which we denote
$C_n^- = C_n \cap [0,c)$ and $C_n^+ = C_n \cap (c,1]$.
Let $T_n^-$ and $T_n^+$ denote the first-return times of $C_n^-$ and $C_n^+$
to $C_n$, respectively.
The \dfn{$n^\text{th}$ level cycles} $\Lambda_n^-$ and $\Lambda_n^+$ of~$f$ are
the following collections of closed intervals
\begin{equation}\label{cycle}
    \Lambda_n^- = 
        \{ \overline{f^k(C_n^-)} : 0 \leq k \leq T_n^- - 1 \},
\end{equation}
and similarly for $\Lambda_n^+$.
Let $\Lambda_0^-=[0,c]$, $\Lambda_0^+=[c,1]$ and let
$\Lambda_n = \Lambda_n^- \cup \Lambda_n^+$, for $n\geq0$.  The interiors of
elements in $\Lambda_n$ are pairwise disjoint (see~\cite{W}) and
cycles are nested in the sense that if $I\in\Lambda_{n+1}$, then
$I\subset J$ for a unique $J\in\Lambda_n$.  The intersection of all levels is
denoted
\begin{equation} \label{attractor}
    \OO_f = \bigcap_{n\geq0} \bigcup\Lambda_n.
\end{equation}

The proof of the following lemma can be found in \cite{MW,W}, (see also
\cite{M1}).  In the statement $G^-_n$ denotes the connected component of $C_{n-1}\setminus \Lambda_n$ adjacent at the left to $C_n$ and $G^+_n$ denotes the connected component adjacent to the right of $C_n$. They are called the {\it gaps} to the left and right of $C_n$, see Figure~\ref{fig:lorenzmap}.

\begin{lem}\label{ergnowand}
If the Lorenz map $f$ is infinitely renormalizable with
$$
\liminf_{n\to\infty} \frac{\abs{C_n}}{\abs{G^\pm_n}}\le \epsilon,
$$
then
\begin{itemize}
\item $f$ is ergodic,
\item  $\OO_f$ is the attractor, it is the limit set of almost every point,
\item $\OO_f$ has Lebesgue measure $0$.
\end{itemize}
\end{lem}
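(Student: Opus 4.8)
The plan is to run the Martens-style argument for minimal Cantor attractors \cite{M1} in the Lorenz setting, deriving all three conclusions from one geometric input extracted from the hypothesis: along a subsequence of levels every element of the cover $\bigcup\Lambda_n$ has, right next to it, a complementary gap of $\OO_f$ of comparable size. First I would fix $n_k\to\infty$ with $|C_{n_k}|/|G^\pm_{n_k}|\le\epsilon$ for both signs, so that the two gaps flanking $C_{n_k}$ inside $C_{n_k-1}$ have length $\ge|C_{n_k}|/\epsilon$. Every branch of every return map $\opR^m f$ is a composition of branches of $f$ and of the power map $x\mapsto x^\alpha$, all of which have negative Schwarzian derivative (assumption~(2), and a direct computation for $x\mapsto x^\alpha$); hence so do all such compositions and the Koebe principle applies to them. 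From this I would extract two facts: (a) a critical branch $f_\pm$ --- and, near the critical point, a branch of $\opR^m f$ --- distorts by only a bounded factor, depending on $\alpha$ and on the $C^2$-distance of $f$ to standard maps, the ratio between the length of a short interval abutting the critical point and the length of an adjacent interval further from it, since the governing power law is concave toward the critical point in the favorable direction; (b) away from the critical point a composition of branches of $f$ has uniformly bounded distortion on any interval whose image under it contains a definite Koebe neighborhood, and the flanking gaps $G^\pm_{n_k}$ provide exactly such space for the iterates $f^l$ appearing in the level-$n_k$ cycles. These estimates are the substitute for a priori bounds, which are unavailable in general; crucially they are claimed only along $n_k$, which will suffice.

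Next, using that $f|_{\OO_f}$ is minimal --- a standard consequence of infinite renormalizability of monotone type, which makes the forward orbit of a critical value $v=f_\pm(c)$ dense in $\OO_f$ --- I would transport the large gap flanking $C_{n_k}$ to every point of $\OO_f$. Given $y\in\OO_f$, the element of $\Lambda_{n_k}$ containing $y$ is $\overline{f^{\,l}(C_{n_k}^\pm)}$ for some $l$, and the branch $f^{\,l}$ mapping $C_{n_k}^\pm$ onto it factors as a bounded-distortion diffeomorphism (by~(b)) after at most one critical step $f_\pm$ (controlled by~(a)); pushing a flanking gap through this branch produces, at a scale comparable to that element, a complementary interval of $\OO_f$ abutting $y$ of relative size bounded below uniformly in $k$ and $y$. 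Hence no point of $\OO_f$ is a Lebesgue density point of $\OO_f$, so $|\OO_f|=0$ by the Lebesgue density theorem --- the third conclusion. Since $\bigcup\Lambda_n\downarrow\OO_f$, this gives $|\bigcup\Lambda_n|\to0$; combined with the absence of wandering intervals and of attracting periodic orbits (standard under the running assumptions via negative Schwarzian), a standard argument shows that Lebesgue-a.e.\ point eventually enters $\bigcup\Lambda_n$ for every $n$, so its $\omega$-limit set lies in $\bigcap_n\bigcup\Lambda_n=\OO_f$ and hence, by minimality, equals $\OO_f$ --- the second conclusion.

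For ergodicity, let $A$ be Borel with $f^{-1}(A)=A$ and $|A|,|A^c|>0$, and pick density points $x_0\in A$ and $y_0\in A^c$, both outside $\OO_f$. I would blow up a small interval around $x_0$ by an iterate of $f$ that restricts to a diffeomorphism of bounded distortion on it, until its image has definite size and covers a definite fraction of a neighborhood of $y_0$ --- possible because $f$ has no wandering intervals and no sinks, so orbits of intervals cannot remain small forever, and because a.e.\ orbit shadows the minimal set $\OO_f$. Invariance of $A$ together with the distortion bound then forces that neighborhood of $y_0$ to be simultaneously almost entirely inside $A$ and almost entirely inside $A^c$, which is absurd; so $|A^c|=0$.

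The step I expect to be the real obstacle is the transport in the second paragraph. The hypothesis bounds the gaps only \emph{relative to $C_{n_k}$}, and since the combinatorial type is unbounded $C_{n_k}$ may be a minuscule fraction of $C_{n_k-1}$; so one cannot argue that a fixed \emph{absolute} proportion of $\bigcup\Lambda_{n_k}$ is discarded in a single step, and the whole difficulty is to show that the \emph{relative} gap, which is definite, survives being carried by the dynamics --- critical branches included --- to every scale around every point of $\OO_f$ without loss. Pinning down the uniform constants there is precisely where the near-standard form of the maps, the exact power law at the critical point, and --- above all --- control of the position of the critical points of the consecutive renormalizations come into play.
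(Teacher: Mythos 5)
The paper does not supply its own proof of this lemma: the sentence immediately preceding it reads ``The proof of the following lemma can be found in [MW, W], (see also [M1])'', so there is no in-text argument to compare against. Your outline nevertheless matches the template one would expect from those references --- the Martens unimodal scheme adapted to Lorenz maps --- and several parts of it are sound in outline: deducing $|\OO_f|=0$ from a density-point obstruction, reducing the attractor claim to the fact that a.e.\ point eventually reaches $C_n$ (which is precisely the content of Lemma~\ref{C} and should be cited rather than re-derived from no-wandering-intervals and no-sinks), and the blow-up argument for ergodicity are all the right shape.

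However, the transport step in your second paragraph --- which you yourself flag as the obstacle --- is a genuine gap, not merely a matter of pinning down constants. The problem is not that the gaps $G^\pm_{n_k}$ might be small (the hypothesis rules that out along the chosen subsequence), nor that the single critical step $f_\pm$ misbehaves (as you observe, the power law only improves the gap-to-interval ratio in the image: if $|G^-_{n_k}|\ge |C_{n_k}^-|/\epsilon$ then $|q(G^-_{n_k})|/|q(C_{n_k}^-)|\ge(1+1/\epsilon)^\alpha-1\ge 1/\epsilon$). The problem is that your item (b) presupposes a monotone extension of $f^{l}$ over $C_{n_k}^\pm\cup G^\pm_{n_k}$ \emph{together with} definite Koebe space beyond it, and you never produce such an extension. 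After the critical step, $f^{l-1}$ is applied to $f_\pm(C_{n_k}^\pm\cup G^\pm_{n_k})$; a preimage of $c$ under some intermediate iterate may lie inside $f_\pm(G^\pm_{n_k})$, truncating the monotone extension, and even when it does not, Koebe requires space \emph{beyond} the transported gap, which the hypothesis does not directly supply. This real-bounds-type input --- a monotone extension of each $f^{l}\vert_{C_{n_k}^\pm}$ whose image contains $f^{l}(C_{n_k}^\pm)$ well inside --- is exactly what the references [MW, W] construct and what your proposal implicitly assumes. Until it is exhibited, the claim that the transported hole has ``relative size bounded below uniformly in $k$ and $y$'' is unsupported and the density-point argument does not close. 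Note also that a small $|C_{n_k}|/|G^\pm_{n_k}|$ already constrains the position of $\text{crit}(\opR^{n_k-1}f)$, so the ``control of the critical points'' you appeal to at the end is partly built into the hypothesis; the genuinely missing piece is the combinatorial location of the monotone extension and its Koebe space, not an extra near-standardness assumption.
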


Let $f$ be an $n$ times renormalizable map. For such a map we will consider the collection of intervals
$$
\mathcal{C}_n=\{I\subset [0,1]\mid \exists e_I\ge 0 \text{ with }
f^{e_I}:I \to C_n \text{ is monotone and onto }\}.
$$
Observe, the numbers $e_I$ are unique. The proof of the following lemma 
can be found in \cite{W,MW} (see also \cite{M1}).

\begin{lem}\label{C} The collection $\mathcal{C}_n$ consists of pairwise disjoint intervals, with $\abs{\bigcup \mathcal{C}_n}=1$. Moreover, $\mathcal{C}_{n+1}$ is a refinement of $\mathcal{C}_n$.
\end{lem}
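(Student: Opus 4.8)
The plan is to establish the two claims of Lemma~\ref{C} --- pairwise disjointness with full measure, and the refinement property --- by induction on $n$, using the combinatorial structure of the cycles $\Lambda_n$ already introduced. First I would unwind the definition: an interval $I \in \mathcal{C}_n$ is a \emph{pullback} of $C_n$ along a monotone branch of some iterate $f^{e_I}$, where monotonicity forces the orbit $I, f(I), \dots, f^{e_I - 1}(I)$ to avoid the critical point $c$ in its interior. The key structural observation is that the elements of $\mathcal{C}_n$ are precisely the connected components obtained by taking, for each $J \in \Lambda_n$ (including $C_n$ itself, with $e = 0$), all monotone preimages of $C_n$ that land inside $J$ via the return dynamics; more concretely, $\mathcal{C}_n$ is generated by pulling $C_n$ back through the finitely many monotone branches appearing in the first-return maps to $C_0 \supset C_1 \supset \cdots \supset C_n$.

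For pairwise disjointness: if $I, I' \in \mathcal{C}_n$ had overlapping interiors, then since $f^{e_I}$ and $f^{e_{I'}}$ are monotone on $I$ and $I'$ respectively and both map onto $C_n$, one could, after possibly relabeling so $e_I \le e_{I'}$, push forward by $f^{e_I}$ to reduce to the case where one of the two preimage-exponents is $0$; then $C_n$ would overlap a monotone preimage $f^{e}(I'') = C_n$ with $e \ge 1$, and tracking this preimage back one sees it must be disjoint from $C_n$ because the orbit of $C_n^\pm$ under the return map does not re-enter the interior of $C_n$ before time $T_n^\pm$ --- this is exactly the disjointness of the interiors of elements of $\Lambda_n$ cited from~\cite{W}. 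For the full-measure claim $\abs{\bigcup \mathcal{C}_n} = 1$: this is where I expect to lean on the negative Schwarzian hypothesis and the absence of non-boundary fixed points (assumptions (2) and (4) on Lorenz maps). The complement $[0,1] \setminus \bigcup \mathcal{C}_n$ consists of points whose forward orbit either never enters $C_n$ monotonically or enters it only after passing through $c$ infinitely often; a Koebe/minimum-principle argument shows the set of points that never fall into the (open, dense) union of monotone pullbacks of $C_n$ has measure zero, since otherwise one would produce a wandering interval or a hyperbolic invariant Cantor set of positive measure, contradicting the structure of Lorenz maps with negative Schwarzian. (Since the lemma is quoted from \cite{W,MW,M1}, I would cite those for this measure-theoretic step rather than reprove it.)

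For the refinement property $\mathcal{C}_{n+1} \prec \mathcal{C}_n$: take $I \in \mathcal{C}_{n+1}$, so $f^{e_I} : I \to C_{n+1}$ is monotone onto. Since $C_{n+1} \subset C_n$ and the first-return map of $C_n$ restricted to $C_{n+1}$ is built from monotone branches of iterates of $f$ (because $\opR^n f$ is renormalizable via the nested interval $C_{n+1}$), there is a monotone iterate $f^m : C_{n+1} \to C_n$ implicit in the return structure --- more precisely, $C_{n+1}^\pm$ returns to $C_n$ monotonically, and by concatenating I get that $I$ sits inside some $I' \in \mathcal{C}_n$ with $f^{e_{I'}} : I' \to C_n$ monotone, where $e_{I'} = e_I$ minus the appropriate return-time offset. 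The cleanest way to phrase this is: every monotone pullback of $C_{n+1}$ extends to a monotone pullback of $C_n$ by composing with a monotone branch carrying $C_{n+1}$ into $C_n$, so each element of $\mathcal{C}_{n+1}$ is contained in a (necessarily unique, by disjointness) element of $\mathcal{C}_n$.

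The main obstacle I anticipate is not the combinatorics --- the disjointness and refinement are essentially bookkeeping with the cycle structure $\Lambda_n$ --- but rather the full-measure statement $\abs{\bigcup \mathcal{C}_n} = 1$, which genuinely requires the no-wandering-interval / expansion technology for negative-Schwarzian Lorenz maps. Since the excerpt explicitly attributes the lemma to \cite{W,MW,M1}, the honest move in this paper is to cite those sources for the measure estimate and supply the combinatorial bookkeeping (disjointness, refinement, uniqueness of $e_I$) in detail here, noting that uniqueness of $e_I$ follows immediately from the fact that distinct monotone-onto iterates to $C_n$ from the same interval would force a periodic orbit or an overlap contradicting the $\Lambda_n$ disjointness together with assumption (4).
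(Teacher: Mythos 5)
The paper does not prove Lemma~\ref{C}. The sentence immediately preceding it is \emph{``The proof of the following lemma can be found in \cite{W,MW} (see also \cite{M1})''}, and no argument is given in the text. There is therefore no internal proof to compare your proposal against; your plan to supply the combinatorial bookkeeping and cite only the measure-theoretic part is more than the authors do, and that is a reasonable choice --- but the combinatorial sketches as written are not yet proofs.

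Two concrete gaps. First, the disjointness reduction. Pushing forward by $f^{e_I}$ makes sense only on $I\cap I'$, and it does correctly reduce to the situation $J:=f^{e_I}(I\cap I')\subsetneq C_n$ with $f^{e_{I'}-e_I}\colon J\to C_n$ monotone onto. But your closing step --- that $J$ ``must be disjoint from $C_n$ because the orbit of $C_n^\pm$ under the return map does not re-enter the interior of $C_n$ before time $T_n^\pm$'' --- only shows that the entry time $e_{I'}-e_I$ is at least $T_n^\pm$; it does not rule out the existence of such a $J$. What must actually be excluded is a monotone branch of an \emph{iterate} of the first-return map $g=\opR_0^n f$ that maps a proper subinterval $J\subset C_n$ onto all of $C_n$. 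That branch would have to reach both endpoints of $C_n$, which are fixed by $g$, so its existence forces the critical orbit of $g$ to hit $\partial C_n$; ruling this out uses that $\partial C_n\not\subset\bigcup\Lambda_{n+1}$ (or, more directly, the nice-interval property that the forward $f$-orbit of $\partial C_n$ never meets $\operatorname{int}(C_n)$, applied to an endpoint of one interval lying in the interior of the other). Note also that the uniqueness of $e_I$ has a one-line argument you should give explicitly: if $f^e$ and $f^{e'}$ with $e<e'$ both carried $I$ monotonically onto $C_n$, then $f^{e'-e}$ would carry $C_n$ monotonically onto itself, which is impossible since $c\in\operatorname{int}(C_n)$ is a discontinuity of $f$. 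Second, the refinement. The sentence ``every monotone pullback of $C_{n+1}$ extends to a monotone pullback of $C_n$'' is the whole content of the claim, not a proof of it. The maximal monotone branch of $f^{e_I}$ containing a given $I\in\mathcal{C}_{n+1}$ is bounded by critical values $f^j(c^\pm)$, and one must show that none of these truncates the branch inside $\operatorname{int}(C_n)$ before it can cover $C_n$; this is a genuine statement about how the critical orbit distributes relative to the nested cycle $\Lambda_{n+1}\subset\Lambda_n$ and it requires an argument (or the same explicit citation the paper uses).
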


To each interval $I\in \mathcal{C}_n\setminus\{C_n\}$,  we can assign a word
$$
\omega_I=(\omega_I(0), \omega_I(1),\omega_I(2),\cdots, \omega_I(e_I-1))\in \{L,R\}^{e_I}
$$
with
$\omega_I(k)=L$ if $f^k(I)\subset [0,c)$ and $\omega_I(k)=R$ if $f^k(I)\subset (c,1]$.
The following lemma has a straight forward proof. It serves mainly as a definition of {\it corresponding collection}.

\begin{lem}\label{correspond} Let $\epsilon>0$ and a finite set $X(f)\subset \mathcal{C}_n(f)$ be given. For $\tilde{f}$ close enough to $f$ there is a set
$X(\tilde{f})\subset \mathcal{C}_n(\tilde{f})$ and an identification
$h:X(f)\to X(\tilde{f})$ such that
$$
\omega_{h(I)}=\omega_I.
$$
Moreover, $|X(\tilde{f})|\ge |X(f)|-\epsilon$.
The collection $X(\tilde{f})$ is called the collection corresponding to $X(f)$.
\end{lem}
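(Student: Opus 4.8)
\textbf{Proof strategy for Lemma~\ref{correspond}.}

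The plan is to realize the combinatorial data of the intervals in $X(f)$ as a finite set of conditions that persist under small perturbations, and then use compactness of $X(f)$ together with the pairwise disjointness from Lemma~\ref{C} to pass to a nearby map. Concretely, each $I \in X(f)$ carries the finite itinerary word $\omega_I \in \{L,R\}^{e_I}$ and the requirement that $f^{e_I}$ map $I$ monotonically onto $C_n$. First I would fix, for each of the finitely many $I \in X(f)$, the orbit $I, f(I), f^2(I), \dots, f^{e_I-1}(I)$ and observe that each iterate $f^k(I)$ lies strictly on one side of the critical point (strictly, because $f^k(I)$ is a nondegenerate interval contained in $[0,c)$ or $(c,1]$, and its closure still misses $c$ unless an endpoint is $c$; one handles the endpoint case by noting that the combinatorial word only records open-interval membership). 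Since there are finitely many iterates and finitely many intervals, there is a uniform gap $\delta_0>0$ between $\bigcup_{I,k} f^k(I)$-interiors and the critical point, except at the finitely many places where an orbit endpoint hits $c$.

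Next, for $\tilde f$ close to $f$ in the appropriate topology (uniform on compact subsets away from $c$, plus the control on $\Domain(\tilde f)$ near $c$ coming from the standard form~\eqref{stdfamily}), the endpoints of $C_n(\tilde f)$ depend continuously on $\tilde f$ because they are periodic points of $\tilde f$ coming from the renormalization structure, and Theorem~\ref{monotonefull} / the archipelago picture guarantees $\tilde f$ is still $n$ times renormalizable with the same combinatorics if $\tilde f$ is close enough. Then I would track each pullback: starting from $C_n(\tilde f)$, take the branch of $\tilde f^{-1}$ determined by the letter $\omega_I(e_I-1)$, then $\omega_I(e_I-2)$, and so on, obtaining an interval $h(I) \subset \mathcal{C}_n(\tilde f)$ with $\tilde f^{e_I}: h(I) \to C_n(\tilde f)$ monotone onto and $\omega_{h(I)} = \omega_I$. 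The monotonicity of each inverse branch is automatic since $\tilde f$ restricted to the relevant domain piece is a diffeomorphism onto its image; one only needs that the forward orbit of $h(I)$ stays within the chosen domain pieces, which holds by continuity once $\tilde f$ is close enough, using the uniform gap $\delta_0$ from the previous step. This produces the identification $h: X(f) \to X(\tilde f)$.

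For the measure estimate $|X(\tilde f)| \ge |X(f)| - \epsilon$: since $\tilde f^{e_I}|_{h(I)}$ is a $\mathcal{C}^3$ diffeomorphism with negative Schwarzian (the diffeomorphic parts and the power map all have this property, and the composition preserves it), bounded distortion gives $|h(I)|/|I| \to 1$ as $\tilde f \to f$, uniformly over the finite collection $X(f)$; alternatively one simply uses that $h(I)$ depends continuously on $\tilde f$ in the Hausdorff metric. Summing over the finitely many $I \in X(f)$ and using that both $\mathcal{C}_n(f)$ and $\mathcal{C}_n(\tilde f)$ consist of pairwise disjoint intervals (Lemma~\ref{C}), we get $|X(\tilde f)| = \sum_{I \in X(f)} |h(I)| \ge \sum_{I \in X(f)} |I| - \epsilon = |X(f)| - \epsilon$ for $\tilde f$ sufficiently close to $f$.

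The main obstacle is the behavior near the critical point: one must be careful that pulling back through a letter does not force the preimage to straddle $c$, and that the domains $\Domain(\tilde f) = [0,1]\setminus\{\tilde c\}$ with a possibly moving critical point $\tilde c$ still admit the required inverse branches. This is exactly where the uniform gap $\delta_0$ and the continuity of the periodic endpoints of $C_n$ are used; the finiteness of $X(f)$ is what makes the gap uniform and hence makes "close enough" a meaningful finite condition. Everything else is a routine continuity-and-distortion argument.
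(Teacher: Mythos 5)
The paper does not prove Lemma~\ref{correspond}; the authors state it ``has a straight forward proof'' and that its role is mainly to fix the phrase \emph{corresponding collection}, so there is no written argument to compare against. Your pullback construction is the natural way to make that remark precise, and the overall structure is sound: for each $I\in X(f)$ trace the finite word $\omega_I$ back from $C_n(\tilde f)$ through the inverse branches of $\tilde f$ dictated by the letters, obtain $h(I)\in\mathcal{C}_n(\tilde f)$ with the same word, and recover $|X(\tilde f)|\ge |X(f)|-\epsilon$ from finiteness of $X(f)$, disjointness (Lemma~\ref{C}), and continuous dependence of the finitely many relevant endpoints on $\tilde f$. Of your two suggested arguments for the measure bound, the continuity one is the clean one; the negative-Schwarzian/bounded-distortion remark controls the internal geometry of a single branch but does not by itself compare $|h(I)|$ with $|I|$ across two different maps, so it is better dropped.

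One citation is off target. Theorem~\ref{monotonefull} says that a monotone family is a full family, i.e.\ realizes all combinatorics; it is not a persistence statement, and it does not give that every $\tilde f$ near $f$ is again $n$ times renormalizable with the same combinatorics. What your argument actually needs is that the first $n$ levels of the renormalization structure depend continuously on the map on the relevant island, equivalently that the periodic orbits bounding $C_1\supset\cdots\supset C_n$ persist and move continuously, and that the same holds for the finitely many preimages bounding the intervals of $X(f)$. That is part of the archipelago/island structure of \cite{MM,W}, not of Theorem~\ref{monotonefull}. With that reference replaced, your argument is a correct completion of the ``straightforward'' proof the authors left implicit.
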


\subsection{Measures}

Let $I$ be an interval, let $g: I\to I$, and let $\delta_x$ denote the
Dirac measure at~$x$. A measure $\mu: I \to \mathbb{R}$ is called a
\dfn{physical measure} (for~$g$) if
\[
    \frac 1 n \sum_{k=1}^n \delta_{g^n(x)} \to \mu,
\]
in the weak-$\star$ topology, for (Lebesgue-)almost every $x\in I$.

We will now describe the construction of invariant measures on the attractor of
infinitely renormalizable maps.
Let $f$ be an infinitely renormalizable map of combinatorial type
$\{(a_n,b_n)\}_{n=1}^\infty$ and to avoid technicalities we assume that $f$ has no wandering
intervals.
Let $\Sigma_n(f)$ denote the $\sigma$--algebra generated by
$\bigcup_{k=0}^n \Lambda_k(f)$, i.e.\ all cycles up to and including level $n$.
Define the \dfn{$n^\text{th}$ level basis measures}
$\nu_n^-,\nu_n^+: \Sigma_n \to \mathbb{R}$ by
\[
    \supp\nu_n^- = \bigcup\Lambda_n^- \quad\text{and}\quad \nu_n^-(I) = 1,
    \text{ for } I \in \Lambda_n^-,
\]
and similarly for $\nu_n^+$.

Since cycles are nested there are well defined maps
$\pi_n: \Lambda_{n+1} \to \Lambda_n$, defined by $\pi_n(I) = J$ where
$J\in\Lambda_n$ is the interval which contains~$I\in\Lambda_{n+1}$.
Let $H_1(\Lambda_n)$ be the measure space spanned by the $n^\text{th}$ level basis
measures, $H_1(\Lambda_n) = \{ x \nu_n^- + y \nu_n^+ : x,y\in\mathbb{R}\}$.
We use the convention that $\nu_n^-$ corresponds to the first coordinate and
$\nu_n^+$ corresponds to the second coordinate.
Following the arguments of \cite{GM} we find that the push-forward
by $\pi_n$ takes $H_1(\Lambda_{n+1})$ to $H_1(\Lambda_n)$ and that the
representation of this map in the basis measures on levels $n+1$ and~$n$
is given by the \dfn{winding matrix}
\begin{equation} \label{winding}
    W_n = \begin{pmatrix} 1 & b_{n+1} \\ a_{n+1} & 1 \end{pmatrix}.
\end{equation}
Let  $M \subset \mathbb{R}^2$ be the positive quadrant.
We have the following identification (see \cite{GM})
\begin{prop}
The invariant measures on $\OO_f$ are isomorphic to the inverse limit 
\[
    M \xleftarrow{W_0} M \xleftarrow{W_1} M \xleftarrow{W_2} \dotsm.
\]
\end{prop}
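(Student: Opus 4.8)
The plan is to build an explicit affine bijection between the cone of finite positive $f$--invariant Borel measures on $\OO_f$ and the inverse limit $\varprojlim(M,W_n)$, that is, the cone of sequences $(m_n)_{n\ge0}$ with $m_n\in M$ and $W_n m_{n+1}=m_n$, and then to observe that it is a homeomorphism for the weak-$\star$ topology on one side and the product topology on the other. Throughout I use the structural facts recalled above and proved in \cite{W,MW,M1}: $\OO_f$ is a Cantor set, the clopen sets $I\cap\OO_f$ with $I\in\Lambda_n$, $n\ge0$, form a neighbourhood basis, $\max_{I\in\Lambda_n}\abs{I}\to0$, the first-return times $T_n^\pm\to\infty$, $f(\OO_f)\subseteq\OO_f$, and the monotone-type cycle structure in which $f$ carries $\OO_f\cap I$ homeomorphically onto $\OO_f\cap I'$ whenever $I'$ is the successor of $I$ in its cycle, while the two ``last'' intervals of $\Lambda_n^\pm$ are the only elements of $\Lambda_n$ mapped into $C_n=C_n^-\cup C_n^+$. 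Since the $T_n^\pm$ intervals on each side at level $n$ all carry equal mass and these masses sum to $\mu(\OO_f)$, they tend to $0$ as $n\to\infty$.

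\emph{From a measure to a sequence.} Given an invariant $\mu$, chaining $\mu(I')=\mu(f^{-1}(I'\cap\OO_f)\cap\OO_f)=\mu(I)$ along each cycle --- where $I$ is the unique preimage in $\OO_f$ of the non-base interval $I'$, the only competing candidates being the two ``last'' intervals, which are mapped into $C_n$ rather than onto $I'$ --- shows that $\mu$ is constant on $\Lambda_n^-$ and on $\Lambda_n^+$, so $\mu|_{\Sigma_n}$ equals $x_n\nu_n^-+y_n\nu_n^+$ for a unique $\Phi_n(\mu)=(x_n,y_n)\in M$. The identity $W_n\Phi_{n+1}(\mu)=\Phi_n(\mu)$ is then nothing but the statement that, up to the countable (hence $\mu$-null) set of endpoints, an interval of $\Lambda_n$ of side $\mp$ is tiled by $1$, respectively $b_{n+1}$ (resp.\ $a_{n+1}$, resp.\ $1$) intervals of $\Lambda_{n+1}$ of side $\mp$, resp.\ $\pm$; this is precisely the computation of the transition in the basis measures as the winding matrix noted following \cite{GM}. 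Thus $\Phi(\mu)=(\Phi_n(\mu))_n$ lies in $\varprojlim(M,W_n)$. Injectivity is immediate, since the cycle intervals form a $\pi$-system generating the Borel $\sigma$-algebra of $\OO_f$ and two finite measures of equal total mass agreeing on it must coincide.

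\emph{From a sequence to a measure.} Given $(x_n,y_n)_n$ in $\varprojlim(M,W_n)$, assign mass $x_n$ to every interval of $\Lambda_n^-$ and $y_n$ to every interval of $\Lambda_n^+$; consistency across levels is exactly the relation $W_n(x_{n+1},y_{n+1})=(x_n,y_n)$, and since $m_n\in M$ this defines a nonnegative finitely additive set function on the algebra generated by $\bigcup_n\Lambda_n$. Compactness of $\OO_f$ together with clopenness of the generators promotes finite to countable additivity, so Carath\'eodory produces a finite Borel measure $\mu$ on $\OO_f$ with $\Phi_n(\mu)=(x_n,y_n)$; it remains to check that $f_*\mu=\mu$, and by the $\pi$-system argument it suffices to check this on the cycle intervals. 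On a non-base interval it is immediate: such an interval has a unique preimage in $\OO_f$, its predecessor in the cycle, of equal mass. For a base interval $C_n^-$ I will telescope: up to a $\mu$-null set, $C_n^-\cap\OO_f$ is the disjoint union of $C_{n+1}^-\cap\OO_f$ and the $b_{n+1}$ non-base intervals of $\Lambda_{n+1}^+$ contained in it, so by the invariance already established on non-base intervals $f_*\mu(C_n^-)=f_*\mu(C_{n+1}^-)+b_{n+1}y_{n+1}$; iterating and using $f_*\mu(C_N^-)\le f_*\mu(C_N\cap\OO_f)=\mu\bigl(f^{-1}(C_N\cap\OO_f)\cap\OO_f\bigr)=x_N+y_N\to0$ (the full preimage being the $\OO_f$-parts of the two ``last'' intervals of $\Lambda_N$, of masses $x_N$ and $y_N$) gives $f_*\mu(C_n^-)=\sum_{m>n}b_m y_m=x_n=\mu(C_n^-)$, the middle equality again coming from $W_n(x_{n+1},y_{n+1})=(x_n,y_n)$. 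The symmetric computation handles $C_n^+$, so $f_*\mu=\mu$, and hence $\Phi$ is onto.

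I expect the invariance check in the previous paragraph to be the real obstacle. The difficulty is that $f$ does not merely permute the intervals of $\Lambda_n$: the two ``last'' intervals wrap onto $C_n^-\cup C_n^+$, so the mass $f_*\mu$ puts on a base interval cannot be read off at level $n$ alone, and one is forced to descend to the refinement $\Lambda_{n+1}$, exploit the decay $x_n,y_n\to0$ (which rests on $T_n^\pm\to\infty$), and invoke the compatibility relations repeatedly, as above. Everything else --- the well-definedness and $M$-valued-ness of $\Phi_n$, the identification of the transition with $W_n$, injectivity, the premeasure extension, and the equivalence of weak-$\star$ convergence of invariant measures with coordinatewise convergence in $\varprojlim(M,W_n)$ --- is routine, and amounts to transcribing the argument of \cite{GM} to the Lorenz cycle picture.
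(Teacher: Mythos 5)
Your argument is correct. The paper itself gives no proof here; it simply defers to \cite{GM}, which sets up the general correspondence between invariant measures on a minimal Cantor set with a nested clopen cycle structure and the inverse limit of the winding data. What you have done is transcribe that correspondence concretely into the Lorenz cycle picture, and the one step that is genuinely non-routine --- showing $f_*\mu=\mu$ on the base intervals $C_n^\pm$ when passing from a compatible sequence to a measure --- you identify and handle correctly: telescoping $f_*\mu(C_n^-)=f_*\mu(C_{n+1}^-)+b_{n+1}y_{n+1}$, controlling the tail via $f_*\mu(C_N^-)\le x_N+y_N\to0$ (which rests on $T_N^\pm\to\infty$ and the fact that only the two last intervals of $\Lambda_N^\pm$ map into $C_N$), and matching against $x_n=x_N+\sum_{m=n+1}^{N}b_m y_m$. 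The forward direction (constancy of $\mu$ along a cycle via unique predecessors, and identification of the level transition with $W_n$) and the topological statement (weak-$\star$ vs.\ product topology via the clopen generating algebra) are indeed routine as you say. Two small points worth making explicit if this were written out in full: that invariant measures on $\OO_f$ are non-atomic (so the endpoint set really is null --- this follows from minimality of $\OO_f$, which precludes periodic points), and that $\Phi$ is not just a bijection on the cone of positive measures but respects the affine/convex structure, so that the identification of simplices of probability measures used later (e.g.\ in Lemma~\ref{ergodicmeasure}) goes through. Neither affects correctness.
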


Note that the transposed winding matrix can be used to calculate the
first-return times of $C_n^\pm$ according to
$$
T_{n+1} = W_n^t T_n,
$$
where $T_k = (T_k^-,T_k^+)$ and $T_0=(1,1)$.

\begin{rem}  The idea that  a measure plays the role of a cycle in homology and the times  play the role of  a cocycle in cohomology
was explored in \cite{GM} for general minimal Cantor sets.
\end{rem} 

 Normalize the $n^\text{th}$ level basis measures to obtain the 
 \dfn{$n^\text{th}$ level probability basis measures} 
 $$
 \mu^\pm_n=\frac{\nu^\pm_n}{T^\pm_n}.
 $$

  \begin{lem}\label{ergodicmeasure} For every $\epsilon>0$ there exists a sequence $K_n\to \infty$ such that for every sequence  $\{(a_n, b_n)\}_{n=1}^\infty$ with $a_n, b_n\ge K_n$, the map on the corresponding Cantor set  $\OO_f$ has exactly two ergodic probability measures, $\mu^+$ and $\mu^-$. Moreover,
 $$
 \mu^+([0,c])\le \epsilon, \text{ and }  \mu^+([c,1])\ge 1- \epsilon,
 $$ 
 and
  $$
 \mu^-([0,c])\ge 1- \epsilon, \text{ and }  \mu^-([c,1])\le \epsilon.
 $$ 
 The $n^\text{th}$ level probability basis measures $\mu^\pm_n$, $n\ge 2$ even, satisfy the same estimates. For $n>2$ odd we have
 $$
 \mu^-_n([0,c])\le \epsilon, \text{ and }  \mu^-_n([c,1])\ge 1- \epsilon,
 $$ 
 and
  $$
 \mu^+_n([0,c])\ge 1- \epsilon, \text{ and }  \mu^+_n([c,1])\le \epsilon.
 $$ 
 \end{lem}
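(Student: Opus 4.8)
The plan is to use the inverse-limit description of the invariant measures on $\OO_f$ together with the explicit form of the winding matrices $W_n$. The key observation is that if $a_{n+1}$ and $b_{n+1}$ are both large, then
\[
    W_n = \begin{pmatrix} 1 & b_{n+1} \\ a_{n+1} & 1 \end{pmatrix}
\]
is strongly expanding, and the cone it maps the positive quadrant $M$ into gets very thin and aligned with a definite direction. More precisely, $W_n$ maps the first coordinate axis to the direction $(1,a_{n+1})$ and the second coordinate axis to $(b_{n+1},1)$; when $a_{n+1},b_{n+1}$ are huge these two rays are nearly the two coordinate axes again but with an enormous stretch, so $W_n(M)$ is a narrow wedge hugging the union of the two axes, and one more application $W_{n-1}\circ W_n$ collapses it to a thin wedge around a single ray. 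I would first make this precise: show that for a suitable choice of $K_n\to\infty$, the nested intersection $\bigcap_{n} W_0 W_1 \cdots W_{n-1}(M)$ consists of exactly two extremal rays. This gives exactly two ergodic invariant probability measures $\mu^+$ and $\mu^-$ on $\OO_f$ by the Proposition quoted above (ergodic measures correspond to extreme rays of the cone of invariant measures).

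Next I would identify which ray is which and compute the masses of $[0,c]$ and $[c,1]$. By construction $\nu_n^-$ is supported on $\bigcup\Lambda_n^-$, whose elements all lie in the orbit of $C_n^-$, so $\nu_n^-([0,c])$ counts the number of iterates of $C_n^-$ landing in $[0,c)$ and $\nu_n^-([c,1])$ counts those landing in $(c,1]$; the normalization $\mu_n^- = \nu_n^-/T_n^-$ makes these into a probability vector, and the recursion $T_{n+1}=W_n^t T_n$ with $T_0=(1,1)$ controls the growth. The point is that under $W_n^t = \begin{pmatrix}1 & a_{n+1}\\ b_{n+1} & 1\end{pmatrix}$, one coordinate of $T_n$ gets multiplied by the large parameter and dominates; tracking which coordinate dominates alternates with the parity of $n$ because the roles of $a$ and $b$ in the two rows swap relative to which basis measure we follow. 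Spelling this parity bookkeeping out carefully gives the stated estimates: for $n$ even, $\mu_n^+$ is concentrated (up to $\epsilon$) on $[c,1]$ and $\mu_n^-$ on $[0,c]$, and for $n$ odd the two swap. The limiting measures $\mu^\pm$ inherit the even-level estimates since they are weak-$\star$ limits along the inverse system and the even-level coordinates stabilize.

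The quantitative heart is choosing $K_n$. I would fix $\epsilon>0$ and argue inductively: having controlled the measures at level $n$ to within some $\epsilon_n$, choosing $a_{n+1},b_{n+1}\ge K_n$ with $K_n$ large enough forces the level-$(n+1)$ measures to within $\epsilon_{n+1}$, where the $\epsilon_n$ are chosen summable with $\sum\epsilon_n<\epsilon$ (or simply $\epsilon_n\to 0$ fast enough that the tail is $<\epsilon$); the geometric gain per step comes from the factor $\min(a_{n+1},b_{n+1})\ge K_n$ in the relevant matrix entry, which makes the off-diagonal mass a fraction $O(1/K_n)$ of the total. One also needs the two rays to stay a definite cone-distance apart so that the two ergodic measures remain genuinely distinct and not merged in the limit; this again follows because $W_n$ preserves (does not collapse) the separation between the images of the two axes as long as $a_{n+1},b_{n+1}$ are finite, and a uniform lower bound is automatic since every entry is a positive integer.

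The main obstacle I expect is the bookkeeping that pins down the parity statement — i.e.\ verifying that $\nu_n^\pm([0,c])$ and $\nu_n^\pm([c,1])$ really are governed by the rows of $W_{n-1}^t$ in the way claimed, and getting the even/odd labels on $\mu^\pm$ versus $\mu_n^\pm$ consistent. This is a combinatorial unwinding of the definitions of the cycles $\Lambda_n^\pm$, the maps $\pi_n$, and the winding matrix $W_n$ from \cite{GM}, rather than a deep estimate; once the linear-algebra picture of a product of increasingly hyperbolic $2\times 2$ positive-integer matrices is in hand, the convergence and the bound $\epsilon$ are routine. The one genuinely delicate point to double-check is that "exactly two" ergodic measures holds — i.e.\ that the intersection of the nested cones is not a single ray (which would happen if the products $W_0\cdots W_{n-1}$ contracted the whole quadrant to one direction); this is precisely why we cannot take $a_n,b_n$ arbitrarily large independently but need the ray separation argument above.
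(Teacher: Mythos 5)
Your overall strategy --- inverse-limit description, explicit winding matrices, parity bookkeeping, inductive choice of $K_n$ --- matches the paper's (which phrases it projectively via the segment $\mathbb{P}_n$ spanned by $\mu_n^\pm$, but this is the same cone argument). The gap is in your geometric heuristic, and it is not a minor slip: taken literally it proves the opposite of the lemma. The cone $W_n(M)$ is spanned by $(1,a_{n+1})$ (nearly vertical) and $(b_{n+1},1)$ (nearly horizontal); for large $a_{n+1},b_{n+1}$ it is therefore a \emph{wide} cone filling almost all of $M$ --- it is the complementary slivers near the axes that are thin, not $W_n(M)$ itself. Its Hilbert-metric diameter is roughly $\log(a_{n+1}b_{n+1})\to\infty$, so by Birkhoff's theorem $W_n$ contracts the positive cone \emph{less and less} as the return times grow. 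Your claim that $W_{n-1}\circ W_n$ ``collapses it to a thin wedge around a single ray'' would force the inverse limit to be a single ray and the map to be uniquely ergodic, contradicting the statement being proved; and your closing caveat that one ``cannot take $a_n,b_n$ arbitrarily large'' has it exactly inverted --- any $a_n,b_n\ge K_n$ work, and larger is only better.

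The correct mechanism is that the per-step contraction weakens fast enough that the cone never collapses, and $K_n$ is chosen precisely to guarantee this. Concretely: given that $\mu_n^\pm$ project $\epsilon$-close to the distinct measures $\mu_0^\pm$, pick $\delta_n>0$ so that any $\mu\in\mathbb{P}_n$ within $\delta_n$ of $\{\mu_n^+,\mu_n^-\}$ still projects $\epsilon$-close to $\{\mu_0^+,\mu_0^-\}$; the explicit convex-combination formulas (the rows of $W_n$ normalized by $T_{n+1}^\pm$) then show that for $a_{n+1},b_{n+1}$ large enough $p_n(\mu_{n+1}^-)$ lands within $\delta_n$ of $\mu_n^+$ and $p_n(\mu_{n+1}^+)$ within $\delta_n$ of $\mu_n^-$, which is also where the even/odd swap comes from. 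Because $\mu_0^+$ and $\mu_0^-$ are genuinely distinct and $\epsilon<\tfrac12$, the two extremal rays of the inverse limit stay apart; this simultaneously yields the quantitative estimates and the count ``exactly two,'' with no separate ray-separation argument and no upper constraint on $a_n,b_n$.
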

 
 \begin{proof} Let $\mathbb{P}_n\subset \mathbb{R}^2$ be the convex hull of $\mu^+_n$ and $\mu^-_n$. The map $p_n:\mathbb{P}_{n+1}\to \mathbb{P}_n$ is given by
\begin{equation}\label{mumin}
\mu^-_{n+1}\mapsto  \frac{a_{n+1} T^+_n}{a_{n+1} T^+_n+ T^-_n} \mu^+_n+
\frac{T^-_n}{a_{n+1} T^+_n+ T^-_n} \mu^-_n. 
\end{equation}
and
\begin{equation}\label{muplus}
\mu^+_{n+1}\mapsto \frac{T^+_n}{T^+_n+b_{n+1} T^-_n} \mu^+_n+
 \frac{b_{n+1} T^-_n}{T^+_n+b_{n+1} T^-_n} \mu^-_n, 
\end{equation}

Let $p^0_n:\mathbb{P}_n\to \mathbb{P}_0$ be the projection.

The proof of the lemma will be by induction.  Observe, $\mu^+_0([c,1])=1$, $\mu^+_0([0,c])=0$,
 $\mu^-_0([c,1])=0$, and $\mu^-_0([0,c])=1$. This means that the lemma holds for $n=0$. Suppose the lemma holds for $n\ge 0$.
 $$
\dist( p^0_n(\mu^\pm_n), \{\mu^+_0, \mu^-_0\})<\epsilon.
 $$
 Then, there is a $\delta_n>0$ such that if  $\mu\in \mathbb{P}_n$ with
  $$
\dist( \mu, \{\mu^+_n, \mu^-_n\})<\delta_n
 $$
 then
 $$
\dist( p^0_n(\mu), \{\mu^+_0, \mu^-_0\})<\epsilon.
 $$
The equations (\ref{muplus}) and (\ref{mumin}) imply that for $a_n, b_n\ge 1$ large enough we get
 $$
 \dist(p_n(\mu^\pm_{n+1}), \{\mu^+_n, \mu^-_n\})<\delta_n.
 $$
 The nature of the equations   (\ref{muplus}) and (\ref{mumin}) implies that for $n\ge 0$ even we have
 $$
 \dist( p^0_n(\mu^\pm_n), \mu^\pm_0)<\epsilon,
 $$
 and for odd $n\ge 0$ we have
$$
 \dist( p^0_n(\mu^\pm_n), \mu^\mp_0)<\epsilon.
 $$ 
  This finishes the induction and the lemma follows.
  \end{proof}

 \begin{lem}\label{measurebalance} There exists $\epsilon>0$
 with the following property.  For every $K>0$ there exist $w>0$ such that for every full Lorenz map $f$ which is  $\epsilon$--close to standard maps and $c=\text{crit}(f)\in [\frac{1}{K}, 1-\frac{1}{K}]$,
 $$
 \mu([0,c]) \ge w \text{ and } \mu([c,1])\ge w,
 $$
 where $\mu$ is the (unique) absolutely continuous invariant probability measure of $f$.
 \end{lem}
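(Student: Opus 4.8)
The plan is to prove the statement by a compactness argument. Fix once and for all a small $\epsilon>0$ (how small will be specified below; the point is that $\epsilon$ can be chosen \emph{independently} of $K$). For $K>0$ set
\[
\mathcal{F}_{\epsilon,K}=\{\, f : f \text{ is full and } \epsilon\text{-close to standard maps}, \ c_f=\text{crit}(f)\in[1/K,\,1-1/K] \,\}.
\]
This is a compact family in a suitable topology on the diffeomorphic parts together with the parameter $c_f$. First I would show that on $\mathcal{F}_{\epsilon,K}$ the assignment $f\mapsto\mu_f$ is continuous in the weak-$\star$ topology, and that $f\mapsto\mu_f([0,c_f])$ and $f\mapsto\mu_f([c_f,1])$ are continuous and strictly positive; the lemma then follows by letting $w=w(K)$ be the minimum of these two functions over the compact set $\mathcal{F}_{\epsilon,K}$. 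As $K\to\infty$ this $w(K)$ will in general tend to $0$, which is consistent with the statement.

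Next I would record the structural input that makes the family uniform. Since $f$ is full, $f_-(c_f)=1$ and $f_+(c_f)=0$, so the forward orbit of each one-sided critical value is the constant sequence $1$, respectively $0$; hence the post-critical set of every $f\in\mathcal{F}_{\epsilon,K}$ is exactly $\{0,1\}$. For $\epsilon$ small the multipliers at $0$ and $1$ lie within a factor $1\pm C\epsilon$ of $\alpha/c_f$ and $\alpha/(1-c_f)$, which stay bounded away from $1$ for \emph{all} admissible $c_f$ because $\alpha/c_f\ge\alpha>1$ and $\alpha/(1-c_f)\ge\alpha>1$; this is exactly why the choice of $\epsilon$ can be made uniform in $K$. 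Thus every $f\in\mathcal{F}_{\epsilon,K}$ is a Misiurewicz map whose critical values land immediately on hyperbolic repellers, with all constants uniform over the family. Moreover a full Lorenz map is never renormalizable (a return interval $C\ni c$ would force, via the description of renormalization above, both $0$ and $1$ into $\overline C$, so $C=[0,1]$), and by the topological theory of Lorenz maps it is transitive, indeed exact, on $[0,1]$; therefore its (given) acip $\mu_f$ has $\supp\mu_f=[0,1]$, with a density that is continuous and strictly positive on $(0,1)$, its only possible singularities lying over the post-critical set $\{0,1\}$.

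Then I would invoke statistical stability for this uniformly Misiurewicz family: $f\mapsto\mu_f$ is weak-$\star$ continuous on $\mathcal{F}_{\epsilon,K}$. Granting this, suppose the lemma fails for some $K$; then there are $f_j\in\mathcal{F}_{\epsilon,K}$ with, say, $\mu_{f_j}([0,c_{f_j}])\to 0$. Passing to a subsequence, $f_j\to f_\infty\in\mathcal{F}_{\epsilon,K}$ and $c_{f_j}\to c_\infty\in[1/K,1-1/K]$, and by statistical stability $\mu_{f_j}\to\mu_{f_\infty}$ weak-$\star$. Since $\mu_{f_\infty}$ is atomless, a squeeze with the closed sets $[0,c_\infty\pm\eta]$ yields $\mu_{f_j}([0,c_{f_j}])\to\mu_{f_\infty}([0,c_\infty])$, so $\mu_{f_\infty}([0,c_\infty])=0$. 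But $[0,c_\infty]\supseteq(c_\infty-1/(2K),c_\infty)$ is a nonempty open set, contradicting $\supp\mu_{f_\infty}=[0,1]$. The identical argument bounds $\mu_f([c_f,1])$ from below, which proves the lemma.

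The step I expect to be the main obstacle is the statistical stability assertion. If no ready reference in the Lorenz setting is available, it has to be proved directly via a uniform spectral-gap (Lasota--Yorke / Keller--Liverani type) estimate for the transfer operators of a map induced on a fixed neighbourhood of $c$: one reduces to a uniformly expanding Markov map, and the inducing scheme, the hyperbolicity along the post-critical orbit, the Koebe distortion bounds, and the tails of the return times are all uniform over $\mathcal{F}_{\epsilon,K}$ precisely because the Misiurewicz constants are, after which continuity of the leading eigenfunction — hence of the density in $L^{1}$, hence of $\mu_f$ — is standard. A secondary and more routine point is to pin down the function-space setup in which $\mathcal{F}_{\epsilon,K}$ is compact and the standing assumptions (only fixed points $0$ and $1$, both hyperbolic repellers; negative Schwarzian) persist in the limit for the fixed small $\epsilon$; here it is cleanest to lean on the Koebe principle, which needs only the negative-Schwarzian hypothesis, rather than on $\mathcal{C}^{3}$ bounds.
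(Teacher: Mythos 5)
Your approach is genuinely different from the paper's and, as you yourself flag, it has a real gap at its center. The paper does not take the compactness--plus--statistical-stability route at all. Instead it works directly with the representation of $\mu$ as the Ces\`aro limit of push-forwards of Lebesgue measure along the branches $\mathcal{B}_k$ of $f^k$,
\[
\mu([0,c]) \;=\; \lim_{n\to\infty}\frac1n\sum_{k=0}^{n-1}\sum_{I\in\mathcal{B}_k} f^I_*(\lambda|I)([0,c]),
\]
and then proves a uniform lower bound $\abs{I^-}/\abs{I}\ge w$, where $I^-=(f^I)^{-1}([0,c])$, for \emph{every} branch $I\in\mathcal{B}_k$. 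The branch bound is obtained by decomposing $f^I$ into at most four pieces: a (possibly empty) initial run that lives in $[0,c]$ or $[c,1]$, one crossing of the critical point, a run near a fixed point, and a final single iterate. Distortion of the pieces that stay near the hyperbolic repellers $0,1$ is controlled by the uniform expansion and bounded nonlinearity there, and the remaining pieces are controlled by Koebe. This yields an explicit $w=w(\epsilon_0,K)$ with no appeal to any continuity of $f\mapsto\mu_f$. So the paper's argument is self-contained, elementary, and immediately gives a quantitative constant; yours is a soft argument that is conceptually clean but not quantitative.

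The concrete gap in your proposal is the statistical stability claim: you need $f\mapsto\mu_f$ to be weak-$\star$ continuous on $\mathcal{F}_{\epsilon,K}$, and you correctly identify that this is where the work lies, but you do not prove it and there is no off-the-shelf reference in the Lorenz setting that you can simply cite. Proving it via a uniform Lasota--Yorke/spectral-gap estimate for an induced uniformly expanding Markov map is plausible, but you would still have to establish uniform (over $\mathcal{F}_{\epsilon,K}$) distortion bounds, uniform tail estimates for the return times of the inducing scheme, and persistence of the Markov structure under perturbation; that machinery is at least as heavy as the direct branch estimate and, ironically, rests on the very same Koebe and hyperbolicity-near-$\{0,1\}$ inputs the paper uses directly. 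A secondary, smaller gap is your assertion that the density of $\mu_f$ is strictly positive on $(0,1)$: what your contradiction actually needs is only $\supp\mu_f=[0,1]$, and this does follow from transitivity of a full Lorenz map, but the stronger positivity-of-density statement is not used and should not be asserted without proof. Your observations that $\epsilon$ can be chosen uniformly in $K$ (because the multipliers at $0$ and $1$ are $\ge\alpha>1$ regardless of $c_f$), that a full Lorenz map is not renormalizable (a return interval $C\ni c$ would have to contain $0$ and $1$), and the squeeze argument with atomlessness of $\mu_{f_\infty}$ are all correct.
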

 
 \begin{proof} Let $\mathcal{B}_n$ be the collection of branches of $f^n$: if the interval $I\in \mathcal{B}_n$ then $f^I=f^n: I\to [0,1]$ is monotone and onto. In this situation there is an asymptotic expression for the invariant measure:
 \begin{equation}\label{PF}
 \lim_{n\to \infty} \frac{1}{n} \sum_{k=0}^{n-1} \sum _{I\in \mathcal{B}_k} f^I_*(\lambda|I) ([0,c])=\mu([0,c]),
 \end{equation}
 where $\lambda|I$ represents the restriction of the Lebesgue measure to the interval $I$. 
 
 For each $I\in \mathcal{B}_k$ let $I^-=(f^I)^{-1}([0,c])$. We will show that there is a $w>0$ such that 
 \begin{equation}\label{muuu}
     \abs{I^-}/\abs{I}\ge w,
 \end{equation}
 for every $I\in \mathcal{B}_k$. 
 
 To prove (\ref{muuu}) we decompose each branch $f^I$ in four parts. Namely, for every branch $I\in \mathcal{B}_k$ there exist
$t, t'\ge 0$, and an interval $I_1$, with $c\in \partial I_1$ and $I_1'=f(I_1)$, and  a second interval $I_2=[0,c] \text{ or } [c,1]$ such that
$$
f^I=f|I_2\circ f^{t'}|I'_1\circ f| I_1\circ f^t|I,
$$ 
where the surjective maps $f^t:I\to I_1$ and $f^{t'}:I_1'\to I_2$ have monotone extensions mapping onto $[0,1]$. These extensions are  branches in $\mathcal{B}_t$ and  $\mathcal{B}_{t'}$ resp. The condition on the map $f$, namely that it is close enough to a standard full map, and hence has derivative  definitely above $1$ and uniformly bounded second derivative in a large neighborhood of $0$ and $1$, imply that the branch $f^{t'}|I'_1$
 has uniformly bounded distortion (in terms of $\epsilon>0$). For this type of branch in $\mathcal{B}_{t'}$ the distortion is controlled by how fast the preimages are contracted towards the fixed points and a bound on the nonlinearity of the original map.

If the branch $f^{t}|I$ is of the same type, that is when $I_1=[0,c] \text{ or }[c,1]$ then the same reasoning gives a uniform bound on the distortion of the first part of the decomposition. Otherwise,  it is the Koebe Lemma, \cite{MS}, which gives a uniform bound on the distortion.

The single iterates $f|I_2$ and $f|I_1$ are also unable to cause that the preimage $(f^I)^{-1}(c)$, where $c\in [\frac{1}{K}, 1-\frac{1}{K}]$, is too close to the boundary of $I$ relative to the size of $I$.
The estimate (\ref{muuu}) follows.

 The limit (\ref{PF}) and estimate (\ref{muuu}) imply $\mu([0,c])\ge w$. Similarly, we can find a uniform estimate for $\mu([c,1])\ge w$.
 \end{proof}

\section{The Critical Point of the Renormalizations}\label{critpts}

In this section we will fix an analytic full family of Lorenz maps close enough to standard maps. It has a
unique full map $\hat{f}$ with critical point $\hat{c}$. For every
$a,b\in \mathbb{N}$ we will consider the archipelago $A_{a,b}$ of maps which
are $(a,b)$--renormalizable. Assume that each archipelago $A_{a,b}$ has a full
island $D_{a,b}$ which has a unique map whose renormalization is full.  This
map is called the full vertex of the island $D_{a,b}$. The constants in the
following statements will be dependent on the family. The main objective of
this section is to get a precise control over the critical point
of the renormalization of the full vertex.

\bigskip

\begin{lem}\label{dzetak} There exists $\rho<1$ such that for every $f\in D_{a,b}$ and $k\ge 1$
\begin{equation}\label{dzet+}
    \frac{f^{-k}_-(c)}{c} \le \rho^k,
\end{equation}
and
\begin{equation}\label{dzet-}
    \frac{1-f^{-k}_+(c)}{1-c}\le \rho^k.
\end{equation}
In particular, $f_+(c)\le \rho^{b-1}$ and  $1-f_-(c)\le \rho^{a-1}$.
\end{lem}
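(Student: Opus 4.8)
The plan is to exploit the fact that on a full island $D_{a,b}$ the left boundary point of the return interval $C=C_{a,b}$ is a periodic point of period $a+1$ whose orbit lies outside the interior of the return intervals, so that the backward iterates $f_-^{-k}(c)$ are controlled by the expansion of $f$ near the repelling fixed point $0$. Concretely, write $p_-$ for the left endpoint of $C^-$; by the discussion of renormalization in the excerpt, $p_-$ is fixed by $f^{a+1}$, and the points $c, f_-^{-1}(c), f_-^{-2}(c), \dots$ obtained by pulling back $c$ along the left branch lie in the interval $[0,p_-]\subset[0,c)$, on which $f_-$ is a diffeomorphism fixing $0$. Since the family is close to standard maps, near $0$ the map $f_-$ is a small $C^2$--perturbation of $x\mapsto u(1-((c-x)/c)^\alpha)$, whose derivative at $0$ is bounded below by a constant $>1$ (here one uses that on $D_{a,b}$ the critical value $f_-(c)$ is bounded away from $1$, equivalently $u$ is bounded away from $1$ — this is exactly the content of the island being a proper subset realizing a nontrivial combinatorics; if not, one argues via the hyperbolicity of $0$ required in \ref{F4}, which gives $|f_-'(0)|\ge\Lambda>1$ uniformly over the family). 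Hence $f_-^{-1}$ is a uniform contraction on a neighborhood of $0$: there is $\rho<1$, depending only on the family, with $|(f_-^{-1})'|\le\rho$ on $[0,c]$. Iterating, $f_-^{-k}(c)\le \rho^k\cdot c$, which is \eqref{dzet+}; the estimate \eqref{dzet-} near $1$ is symmetric, using the repelling fixed point $1$ and the right branch $f_+$.

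The key step making the two "in particular" statements work is to identify the critical values $f_+(c)$ and $f_-(c)$ with backward iterates of $c$. Since $f$ is $(a,b)$--renormalizable of monotone type, the orbit of $C^+$ under $f$ stays to the left of $c$ for steps $1,\dots,b$ before returning, and the return interval is the largest one whose endpoints are periodic with the right periods; at a full vertex the renormalization is onto, which forces $f(C^+)$ to reach all the way up so that after pulling back we get $f_+(c)=f_-^{-(b-1)}(\text{right endpoint of }C^-)\le f_-^{-(b-1)}(c)$. More carefully: $f_+(c)$ is the left critical value, and following the combinatorics of a monotone $(a,b)$--renormalization, the point $f_+(c)$ lands inside $C^-$ after applying $f_-$ exactly $b-1$ times (it spends $b$ steps to the left, one of which is the initial one), and since the relevant backward branch of $f^{b-1}$ is the restriction of iterates of $f_-$ on $[0,c)$, monotonicity gives $f_+(c)\le f_-^{-(b-1)}(c)\le \rho^{b-1}c\le\rho^{b-1}$. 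The bound $1-f_-(c)\le\rho^{a-1}$ follows symmetrically from \eqref{dzet-}, using that the orbit of $C^-$ stays to the right for $a$ steps.

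I expect the main obstacle to be pinning down the uniform constant in "$|f_-'(0)|$ bounded below by a constant $>1$" and, relatedly, making sure the backward orbit $\{f_-^{-k}(c)\}$ genuinely stays in the region where this bound is valid — i.e.\ in a fixed neighborhood of $0$ on which the $C^2$--closeness to the standard family yields the expansion estimate with a single $\rho<1$ for \emph{all} $(a,b)$ and \emph{all} $f\in D_{a,b}$. This is where one must use that the family is fixed and close to standard, that $0$ is a hyperbolic repeller uniformly (property \ref{F4}), and that on the island the periodic endpoint $p_-$ stays bounded away from $c$ (so the whole backward orbit from $c$ is swept into the good neighborhood after finitely — uniformly — many steps, after which the contraction is clean). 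Once the uniform $\rho<1$ is in hand, the iteration and the combinatorial bookkeeping of monotone $(a,b)$--renormalization are routine.
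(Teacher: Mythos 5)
Your plan is in the right spirit but has a genuine gap, and the paper's argument goes a different (cleaner) route that avoids it.

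The gap: you assert that ``there is $\rho<1$ with $|(f_-^{-1})'|\le\rho$ on $[0,c]$.'' This is false as stated. Since $c$ is a critical point, $f_-'(x)\to 0$ as $x\to c$, so $(f_-^{-1})'(y)=1/f_-'(f_-^{-1}(y))$ blows up as $y\to f_-(c)$; there is no uniform Lipschitz bound for $f_-^{-1}$ on $[0,c]$. What you actually need is that the single preimage $f_-^{-1}(c)$ is already $\le\rho c$, i.e.\ that $f_-^{-1}(c)/c$ is bounded away from $1$ uniformly over the archipelago, and then that all further pullbacks contract at least as fast. Near $0$ this follows from hyperbolicity of the fixed point (your property \ref{F4} or closeness to standard maps), but the first step from $c$ to $f_-^{-1}(c)$ is exactly where the danger is: if the critical value $f_-(c)$ approaches $c$, then $f_-^{-1}(c)$ approaches $c$ as well and the contraction degenerates. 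You flag this (``on the island the periodic endpoint $p_-$ stays bounded away from $c$''), but you do not prove it, and neither hyperbolicity of $0$ nor the bare fact that the island realizes nontrivial combinatorics gives it to you directly. Establishing the uniform bound $f_-(c)-c\ge\delta>0$ over $\bigcup A_{a,b}$ is precisely the substantive content of the lemma.

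The paper's proof circumvents the pointwise-derivative issue entirely. It proves the global inequality $f_-(x)\ge\beta_f\,x$ on $[0,c]$ with $\beta_f=\min\{f'(0),\,f_-(c)/c\}$, using the minimum principle (available because of negative Schwarzian derivative): if $f_-(x)<\beta_f x$ somewhere, one finds points $x<y$ in $(0,c)$ with $f'(x)<\beta_f\le f'(y)$ while also $f'(0)\ge\beta_f$, contradicting the minimum principle. This linear lower bound immediately gives $f_-^{-k}(c)\le\beta_f^{-k}c$ for \emph{all} $k\ge1$, with no need to track when the backward orbit enters a neighborhood of $0$. It then shows $\sup_f\beta_f^{-1}<1$: the piece $f'(0)$ is handled by closeness to standard maps ($q'(0)=\alpha u/c>1$), and the piece $f_-(c)/c$ by a dynamical argument — if $f_-(c)\to c$, then on $I=[f_-^{-1}(c),c]$ one has $Df^2\to 0$, contradicting $I\subset f^2(I)$ (which forces $Df^2>1$ somewhere), the inclusion being a consequence of renormalizability. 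Your approach, if completed, would need to supply an equivalent argument for the uniform lower bound on $f_-(c)/c$; at that point the minimum-principle shortcut is strictly easier and is what makes the estimate uniform in $k$ without any bookkeeping of ``how long until the orbit reaches the good neighborhood.''
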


\begin{proof}
The last sentence follows from \eqref{dzet+} and \eqref{dzet-} since
$f_-(c) \geq f_+^{-a+1}(c)$ and $f_+(c) \leq f_-^{-b+1}(c)$ if $f$ is
$(a,b)$--renormalizable.  We now prove \eqref{dzet+}.  The proof of
\eqref{dzet-} follows by symmetry.

Consider the line
\[
    l(x) = \beta_f x,\qquad \beta_f = \min\{f'(0),f_-(c)/c\}.
\]
We claim that $f_-(x) \geq l(x)$, $\forall x \in [0,c]$.  Suppose not, so that
$f(x) < l(x)$ for some $x \in (0,c)$ since $f(0)=l(0)$ and $f_-(c) \geq l(c)$.
Without loss of generality we may assume
$f'(x) < l'(x) = \beta_f$.   Since $f_-(c) \geq l(c)$ and
$f_-'(c) = 0$, $\exists y \in (x,c)$ such that $f(y)=l(y)$ and
$f'(y) \geq \beta_f$.  By the minimum principle (see~\cite{MS})
$f'(x) \geq \min\{f'(0),f'(y)\} \geq \beta_f$ which contradicts $f'(x) <
\beta_f$.

From the above we get that $f_-^{-k}(c) \leq l^{-k}(c) = \beta_f^{-k} c$.
Let
\[
    \rho = \sup_{f \in \bigcup A_{a,b}} \beta_f^{-1}.
\]
We have to show that $\rho < 1$.  Note that $f'(0)$ is uniformly bounded away
from~$1$ as long as the family is close enough to standard maps
(since $q'(0) = \alpha u/c \geq \alpha > 1$ for a nontrivial standard map),
so we only need to find a uniform lower bound on $f_-(c)/c$.

Let $I = [f_-^{-1}(c),c]$ so that $f^2(I) = [f_+(c), f(f_-(c))]$.
Since $f$ is renormalizable this implies that
$I \subset f^2(I)$ and consequently $Df^2(x) > 1$ for some $x \in I$.
A calculation using the fact that the critical point and the derivative of the
diffeomorphic parts have uniform bounds over the family shows that
$Df^2(y) \to 0$ as $f_-(c)\to c$, $\forall y \in I$.  In particular,
$f_-(c)$ must be uniformly bounded away from $c$ if $f$ is renormalizable.
\end{proof}

\begin{lem}\label{fam depend} There exists $\rho<1$ such that for every $f\in D_{a,b}$
\begin{equation}\label{c2dist}
\dist_{C^2}(f, \hat{f})=O(\rho^{\min\{a,b\}}).
\end{equation}
In particular,
\begin{equation}\label{dfdist0}
\left\lvert\frac{Df(0)}{D\hat{f}(0)}-1\right\rvert=O(\rho^{\min\{a,b\}}),
\end{equation}
\begin{equation}\label{dfdist1}
\left\lvert\frac{Df(1)}{D\hat{f}(1)}-1\right\rvert=O(\rho^{\min\{a,b\}}).
\end{equation}
\end{lem}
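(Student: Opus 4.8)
The plan is to bootstrap from Lemma~\ref{dzetak}. That lemma already tells us that on an island $D_{a,b}$ the two critical values are exponentially close to the extremes: $f_+(c) \le \rho^{b-1}$ and $1 - f_-(c) \le \rho^{a-1}$. So the map $f$ is already $O(\rho^{\min\{a,b\}})$--close to being \emph{full} in terms of its critical values. Since $\hat f$ is \emph{the} full map of the family, and $f$ ranges over $D_{a,b}$, the first step is to reduce the $C^2$--distance between $f$ and $\hat f$ to the distance between their parameters $\lambda(f)$ and $\hat\lambda$ in $U$. For an analytic family the map $\lambda \mapsto F_\lambda$ is Lipschitz (indeed real-analytic) in the $C^2$--norm on the compact set $U$, so it suffices to show $|\lambda(f) - \hat\lambda| = O(\rho^{\min\{a,b\}})$.

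Second step: control the parameter displacement by the displacement of the critical values. Here I would use that the family is \emph{monotone} (properties \ref{F1}--\ref{F4}) and analytic, close to standard maps. Monotonicity in each coordinate, together with analyticity (so that the derivative of $\lambda \mapsto (F_\lambda)_-(c), (F_\lambda)_+(c)$ is nondegenerate near the full vertex $\hat\lambda$ — this is where the analyticity hypothesis flagged just before Lemma~\ref{fam depend} is needed, cf.\ the remark that ``we need to assume that our families are analytic''), gives a two-sided bound: the map from parameter to the pair of critical values is a bi-Lipschitz coordinate change in a neighborhood of $\hat\lambda$. Since at $\hat\lambda$ the critical values are $(1,0)$ and on $D_{a,b}$ they are within $\rho^{\min\{a,b\}-1}$ of $(1,0)$, we conclude $|\lambda(f) - \hat\lambda| = O(\rho^{\min\{a,b\}})$, hence \eqref{c2dist} after possibly enlarging $\rho$ to absorb constants.

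Third, \eqref{dfdist0} and \eqref{dfdist1} are immediate consequences of \eqref{c2dist}: the map $g \mapsto Dg(0)$ is continuous (Lipschitz) on $C^2$ Lorenz maps, and $D\hat f(0)$ is bounded away from $0$ and $\infty$ (indeed away from $1$, being a hyperbolic repeller by \ref{F4} and close to $q'(0)=\alpha u/c \ge \alpha>1$), so dividing through preserves the $O(\rho^{\min\{a,b\}})$ estimate. Same at $x=1$.

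The main obstacle is the second step: making precise the claim that ``critical values exponentially close to full'' forces ``parameter exponentially close to $\hat\lambda$.'' This is not purely formal — it requires a quantitative non-degeneracy of the parameter dependence, i.e.\ a uniform lower bound on how fast the critical values move with $\lambda$ near $\hat\lambda$. Monotonicity alone gives injectivity but not a Lipschitz inverse; this is exactly why one passes to analytic families, where the real-analytic map $\lambda \mapsto (\text{critical values})$ has a local analytic inverse with controlled derivative near $\hat\lambda$, uniformly since everything lives over the compact $U$ and the family is fixed. Once that non-degeneracy is in hand, the rest is routine.
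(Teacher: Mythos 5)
Your overall plan — reduce the $C^2$--distance to the parameter distance, then control the parameter distance using the critical-value displacement from Lemma~\ref{dzetak} — is the same as the paper's. But your second step contains a genuine gap where the paper is more careful.

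You assert that monotonicity plus analyticity makes the map $\lambda\mapsto\bigl((F_\lambda)_-(c),(F_\lambda)_+(c)\bigr)$ bi-Lipschitz near $\hat\lambda$, i.e.\ that its Jacobian is nondegenerate at the full vertex. That does not follow. Analyticity on its own gives no lower bound on the derivative: the map $x\mapsto x^3$ is real-analytic, strictly monotone, and has vanishing derivative at the origin, so its inverse near $0$ is only $\tfrac13$--H\"older, not Lipschitz. The same degeneracy could in principle occur for the map $V(\lambda)=\bigl(1-F_\lambda(c_-),\,F_\lambda(c_+)\bigr)$ at $\hat\lambda$, and nothing in the hypotheses \ref{F1}--\ref{F4} rules it out. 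Your closing paragraph even flags exactly this issue (``requires a quantitative non-degeneracy'') but then resolves it by fiat, claiming analyticity gives a local analytic inverse ``with controlled derivative'' — which is precisely what the cubic counterexample denies.

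The paper's proof avoids this by \emph{not} claiming nondegeneracy. It uses only that $V$ is real-analytic and that its image covers a neighborhood of $(0,0)\in\R^2$ (because $\hat\lambda$ is the full vertex and the family is full). A real-analytic map onto a neighborhood of $(0,0)$ must, in every direction, have some non-vanishing derivative of finite order $D$. That yields only the H\"older-type estimate
\[
\dist(D_{a,b},\hat\lambda)=O\bigl((\rho^{\min\{a,b\}})^{1/D}\bigr),
\]
not a Lipschitz one, but this is enough: the lemma only claims $O(\tilde\rho^{\min\{a,b\}})$ for \emph{some} $\tilde\rho<1$, so one simply replaces $\rho$ by $\rho^{1/D}$. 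Your steps~1 and~3 (Lipschitz dependence of $F_\lambda$ on $\lambda$ in $C^2$, and extracting the derivative estimates from \eqref{c2dist} using that $D\hat f(0), D\hat f(1)$ are bounded away from $0$) are fine and match the paper. The fix, then, is to replace your bi-Lipschitz claim with the paper's finite-order nonvanishing argument and accept the weaker H\"older exponent.
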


\begin{proof} The family $F$ is defined on the parameter set $U$. Without loss of generality we may assume that the full map $\hat{f}$ of the family corresponds to the parameter values $\lambda_1=\lambda_2=0$. 
Consider the function $V:U\to \mathbb{R}^2$ defined by
$$
V(\lambda_1, \lambda_2)=(1-F(\lambda_1,\lambda_2)(c_-(\lambda_1, \lambda_2)), F(\lambda_1,\lambda_2)(c_+(\lambda_1, \lambda_2))).
$$
This function has a holomorphic extension to a neighborhood of $(0,0)\in \mathbb{C}^2$. Observe, $V(0,0)=(0,0)$ and, according to Lemma \ref{dzetak},  $$
V(D_{a,b})\subset [0, \rho^{\min\{a,b\}-1}]^2.
$$
The parameters $(0,0)$ correspond to the full map of the family. So, the image of $V$ contains a neighborhood of $(0,0)\in \mathbb{R}^2$. This implies that $V$ has in all directions some non-zero higher order derivative. 
Hence, for some $D\ge 1$ large enough,
$$
\dist(D_{a,b},0)=O(( \rho^{\min\{a,b\}})^\frac{1}{D}).
$$
The functions in the family depend  smoothly on the parameters, and the lemma follows.
\end{proof}

If a map $f$ is $(a,b)$--renormalizable $f^{-b}(c)$ denotes the corresponding preimage of the critical point with the branch $f_-$. Similarly, $f^{-a}(c)$ is the corresponding preimage with the branch $f_+$.

\begin{lem}\label{gamma} There exist $\gamma_-, \gamma_+>0$ such that for every $K>0$ and every sequence $a,b\to \infty$ with $\frac{1}{K}\le \frac{a}{b}\le K$ and $f_{a,b}\in D_{a,b}$
\begin{equation}\label{gamma-}
\lim_{a,b\to \infty} \frac{Df_{a,b}^b(f^{-b}_{a,b}(c_{a,b}))}{D\hat{f}^b(0)}=\gamma_->0,
\end{equation}
and
\begin{equation}\label{gamma+}
\lim_{a,b\to \infty} \frac{Df_{a,b}^a(f^{-a}_{a,b}(c_{a,b}))}{D\hat{f}^a(1)}=\gamma_+>0.
\end{equation}
\end{lem}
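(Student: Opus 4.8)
The plan is to establish \eqref{gamma-}; equation \eqref{gamma+} follows from it by the left--right symmetry interchanging $0\leftrightarrow 1$, $f_-\leftrightarrow f_+$ and $a\leftrightarrow b$. Fix $K$, let $a,b\to\infty$ with $\tfrac1K\le\tfrac ab\le K$, and write $f=f_{a,b}$, $c=c_{a,b}$; thus $f^{-b}_{a,b}(c_{a,b})=f^{-b}_-(c)$, the $b$-fold image of $c$ under the inverse of the left branch of $f$. By Lemma~\ref{dzetak}, $f^{-k}_-(c)\le\rho^k c<c$ for $1\le k\le b$, so the points $z_k:=f^{-(b-k)}_-(c)$, $0\le k\le b$, form the forward $f$-orbit of $z_0=f^{-b}_-(c)$, they all lie in $[0,\rho c]$, and $z_b=c$. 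Since $z_k<c$ for $k<b$, the chain rule gives $Df^b(f^{-b}_-(c))=\prod_{k=0}^{b-1}f'(z_k)$, while $D\hat f^b(0)=(\hat f'(0))^b$ because $0$ is a fixed point of $\hat f$. Hence the quantity to control is
\[
    \log\frac{Df^b(f^{-b}_-(c))}{D\hat f^b(0)}=\Sigma_1+\Sigma_2,
\]
where $\Sigma_1=\sum_{k=0}^{b-1}\big(\log f'(z_k)-\log f'(0)\big)$ and $\Sigma_2=b\big(\log f'(0)-\log\hat f'(0)\big)$.

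The next step is to show $\Sigma_2\to 0$ and that $\Sigma_1$ is a uniformly convergent series. For $\Sigma_2$: estimate \eqref{dfdist0} of Lemma~\ref{fam depend} gives $|f'(0)/\hat f'(0)-1|=O(\rho^{\min\{a,b\}})$, and the constraint $\tfrac1K\le\tfrac ab\le K$ forces $\min\{a,b\}\ge b/K$, so $|\Sigma_2|=O(b\,\rho^{b/K})\to 0$. For $\Sigma_1$: since $f$ is close to a standard map and the left branch of a standard map, $q_-(x)=u\big(1-((c-x)/c)^\alpha\big)$, is smooth with strictly positive derivative on $[0,\rho c]$ — an interval at distance $(1-\rho)c\ge(1-\rho)/K$ from the singular point $c$ — there is a constant $C$, depending only on the family, with $|\log f'(x)-\log f'(0)|\le C|x|$ for all $x\in[0,\rho c]$ and all admissible $f$. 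Using $z_{b-j}=f^{-j}_-(c)\le\rho^j c$ (Lemma~\ref{dzetak}) we obtain
\[
    \sum_{j\ge1}\big|\log f'(f^{-j}_-(c))-\log f'(0)\big|\le C\sum_{j\ge1}\rho^j c\le\frac{C\rho}{1-\rho},
\]
so the series defining $\Sigma_1$ converges absolutely, with a tail small uniformly in $a$, $b$ and in the choice of $f\in D_{a,b}$.

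Finally I would pass to the limit. By Lemma~\ref{fam depend}, $\dist_{C^2}(f,\hat f)=O(\rho^{\min\{a,b\}})\to 0$, and in particular $c\to\hat c$; hence the left branch of $f$ converges to $\hat f_-$ in $C^1$ on a fixed neighborhood of $0$, and its inverse iterates converge termwise, $f^{-j}_-(c)\to\hat f^{-j}_-(\hat c)$ for each fixed $j$ (here $\hat f_-\colon[0,\hat c]\to[0,1]$ is a homeomorphism because $\hat f$ is full, and $0$ is a hyperbolic fixed point of $\hat f$, so $\hat f^{-j}_-(\hat c)\to 0$ geometrically). Combining termwise convergence with the uniform tail bound,
\[
    \Sigma_1\longrightarrow L_-:=\sum_{j\ge1}\big(\log\hat f'(\hat f^{-j}_-(\hat c))-\log\hat f'(0)\big),
\]
a finite number; since $\Sigma_2\to 0$ this gives $\log\big(Df^b(f^{-b}_-(c))/D\hat f^b(0)\big)\to L_-$, i.e.\ \eqref{gamma-} holds with $\gamma_-=e^{L_-}>0$. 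All estimates used are uniform over $f_{a,b}\in D_{a,b}$, so the convergence is; and $\gamma_-$ depends only on $\hat f$, not on $K$, as the statement requires.

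The one delicate point is $\Sigma_2$: it accumulates $b$ nearly identical errors and vanishes only because the $C^2$-rate $O(\rho^{\min\{a,b\}})$ of Lemma~\ref{fam depend} beats the linear factor $b$, which needs precisely the hypothesis $\tfrac1K\le\tfrac ab\le K$ (if, e.g., $a$ remained bounded while $b\to\infty$, the ratio would not converge at all). Everything else — uniform smoothness and nondegeneracy of $f'$ away from the critical point, and continuity of the finitely many inverse-branch orbit points — is a routine consequence of closeness to standard maps together with Lemmas~\ref{dzetak} and~\ref{fam depend} and the explicit form \eqref{stdfamily}.
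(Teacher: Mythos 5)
Your proof is correct and follows essentially the same route as the paper: both split $\log\bigl(Df^b(f_-^{-b}(c))/D\hat f^b(0)\bigr)$ into the $b\log\bigl(Df(0)/D\hat f(0)\bigr)$ term (killed by \eqref{dfdist0} together with the ratio bound on $a/b$) and a sum of log-ratios $\sum_k\log\bigl(Df(x_k)/Df(0)\bigr)$, then run a head/tail argument where the tail is controlled by exponential decay of the orbit points toward $0$ (your Lipschitz bound $|\log f'(x)-\log f'(0)|\le C|x|$ is the same fact the paper expresses via the mean-value remainder $x_kN_f(t_k)$ with bounded nonlinearity) and the head converges termwise via Lemma~\ref{fam depend}. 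The only cosmetic wrinkle is that you reuse the symbol $K$ both for the $a/b$-ratio bound from the statement and for a lower bound on $c$; these are logically distinct constants (the latter is a family constant, since the critical point stays uniformly away from the boundary), and it would be cleaner to give them different names.
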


\newcommand{\clo}[1]{x_{#1}} 
\newcommand{\fclo}[1]{\hat x_{#1}} 

\begin{proof}
Let $f = f_{a,b}$, $\clo k = f_-^{-k}(c)$ and
$\fclo k = \hat f_-^{-k}(\hat c)$.  Then
\[
    \log \frac{Df^b(\clo b)}{D\hat f^b(0)}
    =
    b \log \frac{Df(0)}{D\hat f(0)}
        + \sum_{k=1}^b \log \frac{Df(\clo k)}{Df(0)}.
\]
Use \eqref{dfdist0} and the fact that $a/b$ is bounded from above and
below to see that the first term on the right-hand side tends to zero as
$a,b\to\infty$.  Let $\sigma(f)$ denote the second term on the right-hand side.
We claim that $\sigma(f_{a,b}) \to \log \gamma_-$ as $a,b\to\infty$, where
\[
    \gamma_-
    =
    \lim_{n\to\infty} \gamma_-(n)
    =
    \lim_{n\to\infty}
    \exp\left\{
        \sum_{k=1}^n \log \frac{D\hat f(\fclo k)}{D\hat f(0)}
        \right\}.
\]
Note that $\fclo k= O(D\hat{f}(0)^{-k})$, so the
limit exists and $\gamma_->0$. In particular,
\begin{equation}\label{gsum}
    \left\lvert \sum_{k\ge n} \log \frac{D\hat f(\fclo k)}{D\hat f(0)}
        \right\rvert
    = O(D\hat{f}(0)^{-n}).
\end{equation}
 We will now prove the claim which in turn
implies \eqref{gamma-}.  The proof of \eqref{gamma+} follows by symmetry.

Use Taylor expansion around $0$ with mean-value form of the remainder to see
that there exist $t_k \in [0,\clo k]$ such that
\[
    \log \frac{Df(\clo k)}{Df(0)}
    =
    \clo k \frac{D^2f(t_k)}{Df(t_k)}.
\]
The fraction on the right-hand side is the nonlinearity of $f$ at $t_k$ which
we denote $N_f(t_k)$.  Note that the nonlinearity is uniformly bounded away
from the critical point, i.e.\ there exists $K < \infty$ not depending on~$f$
such that for $k$ large enough
\[
    \abs{N_f(t_k)} \leq \max_{t \leq \rho^k} \abs{N_f(t)} \leq K.
\]
Now fix $B>0$.  The bound on the nonlinearity and \eqref{dzet+} show that
\begin{equation}\label{Nsum}
    \left\lvert\sum_{k=B+1}^{b} x_k N_f(t_k)\right\rvert=
        O(\rho^{B+1}).
\end{equation}
Lemma~\ref{fam depend} implies that
\begin{equation}\label{xconv}
\sum_{k=1}^{B} \log \frac{Df(\clo k)}{Df(0)}\to 
\sum_{k=1}^B  \log \frac{D\hat f(\fclo k)}{D\hat f(0)}
\end{equation}
as $a,b\to\infty$. Observe,
$$
\begin{aligned}
    \left\lvert\sigma(f_{a,b})-\log \gamma_-\right\rvert\le
&\left\lvert \sum_{k=1}^B \log \frac{Df(\clo k)}{Df(0)}-
\sum_{k=1}^B \log \frac{D\hat f(\fclo k)}{D\hat f(0)} \right\rvert+\\
& \left\lvert\sum_{k=B+1}^{b} x_k N_f(t_k)\right\rvert+
\left\lvert \sum_{k\ge B+1} \log \frac{D\hat f(\fclo k)}{D\hat f(0)}\right\rvert.
\end{aligned} 
$$
Now, use (\ref{gsum}), (\ref{Nsum}), and (\ref{xconv}) and 
the convergence $\sigma(f_{a,b}) \to \log\gamma_-$ follows. 
\end{proof}

\comm{
\begin{proof}
Let $f = f_{a,b}$, $\clo k = f_-^{-k}(c)$ and
$\fclo k = \hat f_-^{-k}(\hat c)$.  Then
\[
    \log \frac{Df^b(\clo b)}{D\hat f^b(0)}
    =
    b \log \frac{f'(0)}{\hat f'(0)}
        + \sum_{k=1}^b \log \frac{f'(\clo k)}{f'(0)}.
\]
Use \eqref{dfdist0} and the fact that $a/b$ is bounded from above and
below to see that the first term on the right-hand side tends to zero as
$a,b\to\infty$.  Let $\sigma(f)$ denote the second term on the right-hand side.
We claim that $\sigma(f_{a,b}) \to \log \gamma_-$ as $a,b\to\infty$, where
\[
    \gamma_-
    =
    \lim_{n\to\infty} \gamma_-(n)
    =
    \lim_{n\to\infty}
    \exp\left\{
        \sum_{k=1}^n \log \frac{\hat f'(\fclo k)}{\hat f'(0)}
        \right\}.
\]
Note that $\fclo k \to 0$ exponentially fast, so the
limit exists and $\gamma_->0$.  We will now prove the claim which in turn
implies \eqref{gamma-}.  The proof of \eqref{gamma+} follows by symmetry.

Use Taylor expansion around $0$ with mean-value form of the remainder to see
that there exist $t_k \in [0,\clo k]$ such that
\[
    \log \frac{f'(\clo k)}{f'(0)}
    =
    \clo k \frac{f''(t_k)}{f'(t_k)}.
\]
The fraction on the right-hand side is the nonlinearity of $f$ at $t_k$ which
we denote $N_f(t_k)$.  Note that the nonlinearity is uniformly bounded away
from the critical point, i.e.\ there exists $K < \infty$ not depending on~$f$
such that for $k$ large enough
\[
    \abs{N_f(t_k)} \leq \max_{t \leq \rho^k} \abs{N_f(t)} \leq K.
\]
Now fix $B>0$.  The bound on the nonlinearity and \eqref{dzet+} shows that
\begin{equation*}
    \sigma(f)
    =
    \sum_{k=1}^{B} \log \frac{f'(\clo k)}{f'(0)}
        + \sum_{k=B+1}^{b} x_k N_f(t_k)
    =
    \tilde\sigma(f,B) + O(\rho^{B+1}).
\end{equation*}
Lemma~\ref{fam depend} implies that
$$
\sum_{k=1}^{B} \log \frac{f'(\clo k)}{f'(0)}\to 
\sum_{k=1}^B  \log \frac{\hat f'(\fclo k)}{\hat f'(0)}
$$
as $a,b\to\infty$.
Since this holds for all $B$ and since $\rho^{B+1} \to 0$ as $B\to\infty$ it follows that $\sigma(f_{a,b}) \to \log\gamma_-$ as claimed.
\comm{
Given $\epsilon>0$, we can choose $n$ such that
\begin{gather*}
O(\rho^{n+1}) < \epsilon/3,\qquad
\abs{\log \gamma_- - \log \gamma_-(n)} < \epsilon/3 \\
\abs{\tilde\sigma(f_{a,b},n) - \log\gamma_-(n)} < \epsilon/3, \qquad
    \forall a,b\geq n.
\end{gather*}
Hence
\begin{multline*}
    \abs{\sigma(f_{a,b}) - \log\gamma_-}
    \leq \\
    \abs{\tilde\sigma(f_{a,b},n) - \log\gamma_-(n)}
        + \abs{\log\gamma_-(n) - \log\gamma_-}
        + O(\rho^{n+1})
    < \epsilon,
\end{multline*}
for all $a,b\geq n$.  This finishes the proof of the claim.
}
\end{proof}

}

Given a map $f\in D_{a,b}$, let $G^-$ and $G^+$ be the \emph{gaps} adjacent to the return interval $C$, see Figure~\ref{fig:lorenzmap}.

\begin{lem}\label{geobounds}  For every $\epsilon>0$ there exists $N\ge 1$ such that if $a, b\ge N$ then for every map  $f\in D_{a,b}$
\begin{equation}\label{gaps}
\max\left\{ \frac{\abs{C}}{\abs{G^-}}, \frac{\abs{C}}{\abs{G^+}} \right\} \le \epsilon
\end{equation}
and $\opR f$ is $\epsilon$--close to a standard map.
\end{lem}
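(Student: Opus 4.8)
The plan is to establish the two assertions of the lemma by reducing them to three facts: (a)~$|C|\to 0$; (b)~$|G^\pm|$ is bounded below; (c)~the diffeomorphic parts of $\opR f$ are $\mathcal C^2$--close to the identity. Here $C=C^-\cup C^+$ denotes the return interval of $f\in D_{a,b}$, with $p^-=\min C$, $p^+=\max C$, $c\in(p^-,p^+)$; being of monotone type, the first--return map has left branch $f^{a+1}|_{C^-}=f_+^{a}\circ f_-|_{C^-}$ and right branch $f^{b+1}|_{C^+}=f_-^{b}\circ f_+|_{C^+}$. Points (a), (b) will give \eqref{gaps}, and (c) the remaining claim. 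By Lemma~\ref{dzetak} the critical value $f_-(c)$ lies within $\rho^{a-1}$ of the repelling fixed point $1$ and $f_+(c)$ within $\rho^{b-1}$ of $0$, so $f(C^-)=f_-(C^-)$ is pushed against $1$ and $f(C^+)$ against $0$; also $c=\text{crit}(f)$ is bounded away from $0$ and $1$ by a constant of the family since $c\to\hat c\in(0,1)$ by Lemma~\ref{fam depend}. All three facts will hinge on one distortion estimate: since $f_+$ maps $[c,1]$ diffeomorphically onto $[f_+(c),1]$, the iterate $f_+^{a}$ extends to a diffeomorphism of $[f_+^{-(a-1)}(c),1]$ onto $[f_+(c),1]$, and as standard power branches and $\mathcal C^3$ diffeomorphic parts all have negative Schwarzian derivative, so does this extension. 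One checks that $f(C^-)$ lies in its domain and that the image $f^{a+1}(C^-)\subseteq C$ is a tiny interval near $c$ sitting inside $[f_+(c),1]$ with a two--sided complement of definite relative length (at least $c-f_+(c)$ on the left and $1-c$ on the right); so by the Koebe principle the distortion of $f_+^{a}$ on $f(C^-)$ is bounded uniformly in $a,b$ and tends to $1$ as $|C|\to 0$, and likewise for $f_-^{b}$ on $f(C^+)$.

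For (a) I would argue as follows. The power--law shape of $f_-$ at $c$ gives $|f(C^-)|\asymp|C^-|^{\alpha}$, and the distortion bound together with the expansion estimate $Df_+^{a}\asymp D\hat f^{a}(1)$ of Lemma~\ref{gamma} yields $|f^{a+1}(C^-)|\asymp D\hat f^{a}(1)\,|C^-|^{\alpha}$, and symmetrically $|f^{b+1}(C^+)|\asymp D\hat f^{b}(0)\,|C^+|^{\alpha}$. Using $C^-\subseteq f^{a+1}(C^-)\subseteq C$ (and the symmetric inclusion for $C^+$) and taking whichever of $C^-,C^+$ is larger, say $C^-$, one has $|C^-|\asymp|f^{a+1}(C^-)|\asymp|C|$, hence $|C^-|^{\alpha-1}\asymp D\hat f^{a}(1)^{-1}$; since $D\hat f^{a}(1)$ grows exponentially this forces $|C|=O(\theta^{\min\{a,b\}})$ for some $\theta<1$.

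For (b): the first--level intervals to the left of $c$ are $C^-$ together with $f(C^+),\dots,f^{b}(C^+)$, and the rightmost of the latter is $f^{b}(C^+)\subseteq f_-^{-1}(C)$, whose right endpoint is $\le f_-^{-1}(p^+)$. Since $p^\pm\to c$ by (a), the gap $G^-$ between $f^{b}(C^+)$ and $C$ contains an interval converging to $(f_-^{-1}(c),c)$, of length $c\bigl(1-(1-c)^{1/\alpha}\bigr)+o(1)$, bounded below by a constant of the family; the bound on $|G^+|$ is symmetric. Combined with (a) this gives $|C|/|G^\pm|\to 0$, i.e.\ \eqref{gaps}. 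For (c): writing $f_-=\phi_-\circ q$ with $q$ the standard left branch of exponent $\alpha$, the left branch of $\opR f$, after the affine rescalings of $C^-$ and $C$ to $[0,1]$, equals $A_2\circ f_+^{a}\circ\phi_-\circ q\circ A_1$ with $A_1,A_2$ affine. Since a rescaled standard branch pre--composed with the inverse of a standard branch is affinely related to a standard branch, choosing the standard map whose critical values match those of $\opR f$ makes the left diffeomorphic part of $\opR f$, up to affine normalisation, the restriction of $f_+^{a}\circ\phi_-$ to $q(C^-)$, an interval of length $O(|C^-|^{\alpha})\to 0$. Its distortion is the product of that of $f_+^{a}$ on $f(C^-)$ and of $\phi_-$ on $q(C^-)$, both $\to 1$, and its nonlinearity is that of $f_+^{a}\circ\phi_-$ on $q(C^-)$ times $|q(C^-)|$, hence $\to 0$; a diffeomorphism of $[0,1]$ with distortion near $1$ and small nonlinearity is $\mathcal C^2$--close to the identity. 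The right diffeomorphic part is symmetric, so $\opR f$ is $\epsilon$--close to a standard map once $a,b$ are large enough.

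The hard part will be the step upgrading ``bounded distortion of $f_+^{a}$ on $f(C^-)$'' to ``distortion $\to 1$'': this single fact simultaneously produces $|C|\to 0$ and the $\mathcal C^2$--closeness of $\opR f$ to standard maps. It requires locating $f(C^-)$ precisely inside the domain $[f_+^{-(a-1)}(c),1]$ of the extended branch, so that the available Koebe space is comparable to that whole interval; the combinatorial identification in (b) of the intervals flanking $C$ is routine but has to be done carefully with the monotone--type hypothesis in hand.
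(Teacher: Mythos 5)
Your decomposition into (a) $|C|\to 0$, (b) $|G^\pm|$ bounded below, and (c) rescaled diffeomorphic parts $\to$ identity, is the correct structure, and your (b) is essentially the paper's argument. But the detour you take in (a) has a genuine gap, and it is also unnecessary given what you already observed.

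In (a) you invoke Lemma~\ref{gamma} to get $Df_+^a\asymp D\hat f^a(1)$ on $f(C^-)$. This is not available here: Lemma~\ref{gamma} is proved under the extra hypothesis $\frac{1}{K}\le \frac ab\le K$, which is not assumed in Lemma~\ref{geobounds} (where $a$ and $b$ may go to infinity at arbitrary relative rates); and it gives the derivative only at the single preimage point $f^{-a}(c)$, not at generic points of $f(C^-)$. Transferring it across $f(C^-)$ needs a Koebe estimate whose space is controlled by the position of $f^{a+1}(C^-)$ inside $[f_+(c),1]$ — i.e.\ by the very smallness of $|C|$ you are trying to prove. More to the point, none of this is needed. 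You already observed, correctly from Lemma~\ref{dzetak}, that $f(C^-)\subset[f_+^{-a+1}(c),1]$ is squeezed into an interval of length $O(\rho^{a-1})$ near $1$, and that $|f(C^-)|\asymp|C^-|^\alpha$ by the global power law of $q$. Combining these two facts gives $|C^-|=O(\rho^{(a-1)/\alpha})\to 0$ directly, with no expansion estimate at all. This is the paper's argument: Lemma~\ref{dzetak} pushes $f(C^\pm)$ against the fixed points, and inverting $f_\mp$ (H\"older of order $1/\alpha$ near the critical value) squeezes $C^\pm$ against $c$. Your self--similarity/expansion detour should simply be cut.

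Your (c) is a genuinely different route from the paper. The paper decomposes the rescaled right diffeomorphic part of $\opR f$ as $\phi_{b-1}\circ\cdots\circ\phi_1\circ\phi$, a composition of rescaled \emph{single} iterates restricted to the pairwise disjoint orbit intervals $I_k$, and bounds $\dist_{C^2}$ via the Sandwich and Zoom Lemmas of \cite{M2} by summing $|\eta_k|+|D\eta_k|=O(|I_k|)$, with the sum small because the $I_k$ carry large Koebe space. You instead keep $f_+^a\circ\phi_-$ whole and apply one Koebe estimate to the tiny interval $q(C^-)$, getting distortion $\to 1$ and rescaled nonlinearity $\to 0$, hence $\mathcal C^2$--closeness to the identity. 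That is a workable and somewhat slicker route, but it rests on (a) and (b) being already in place to supply the Koebe space — so in your write--up it still sits downstream of the flawed step.
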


\begin{proof}
The bound~\eqref{gaps} follows from Lemma~\ref{dzetak}.
To see this let $C^- = C \cap [0,c)$ and $C^+ = C \cap (c,1]$.  Then $f(C^-)
\subset [f_+^{-a+1}(c),1]$ and $f(C^+) \subset [0,f_-^{-b+1}(c)]$.  In
particular, both $\abs{C^\pm} \to 0$, uniformly as $a,b \to
\infty$, by Lemma~\ref{dzetak}.  It follows that $\abs{C} \to 0$ and
consequently $\abs{f_-^{-1}(C)} \to 0$, uniformly as $a,b\to\infty$.  But
$f_-^{-1}(C) \ni f_-^{-1}(c)$ which is uniformly bounded away from $c$ by
Lemma~\ref{dzetak}.  Hence $\abs{C}/\abs{G^-} \to 0$ uniformly as
$a,b\to\infty$.  The proof for the gap on the other side follows by symmetry.
Note that this argument relies heavily on the fact that the critical point stays away from the boundary throughout the family and
that the distortion of the diffeomorphic parts are uniformly bounded.

To prove that $\opR f$ is close to standard maps we need some notation.
We will concentrate on the diffeomorphic part $\tilde{\phi}_+$ of $\tilde{f}=\opR f$. The estimates for the other diffeomorphic part are similar. Decompose the branch $f_+=\phi_+\circ q$. Let 
$I=q(C^+)\subset \Dom(\phi_+)$ and $I_k=f_-^k\circ \phi_+(C^+)$. Let
$\phi=[\phi_+|I], \phi_k=[f_-|I_k]\in \text{Diff}^3([0,1])$ be the rescaled versions of the restrictions. Observe, 
$
\tilde{\phi}_+=\phi_{b-1}\circ \cdots \phi_2\circ \phi_1\circ \phi,
$
and, using the Sandwich Lemma from \cite{M2},
\begin{equation}\label{c2d}
\dist_{C^2}(\tilde{\phi},\id)=O\left( |\eta|+|D\eta| +
\sum_{k=0}^{b-1} \left\{|\eta_k|+|D\eta_k| \right\} \right),
\end{equation}
where $|\eta|$, $|D\eta|$, $|\eta_k|$, and $|\eta_k|$ are the $C^0$--norms of the nonlinearities and the derivatives of the nonlinearities of $\phi$ and $\phi_k$, respectively.

Use the Zoom Lemma from \cite{M2} and
$$
|\eta|= O(|I|), \text{ and } |D\eta|=O(|I|^2).
$$
As we saw in the  proof  for (\ref{gaps}), we can make these contributions to (\ref{c2d}) as small as needed by taking $a, b$ very large. Similarly,
$$
|\eta_k|=O(|I_k|), \text{ and } |D\eta|=O( |I_k|^2).
$$
As we observed in the first part of this proof, these intervals are all uniformly away from the critical point, for $a,b$ large. Away from the critical point the maps in the family will have uniform bounds on the nonlinearity and the derivative of the nonlinearity of $f$. From (\ref{gaps}) and the Koebe Lemma we see that all intervals $I_k$ have more and more  empty space around them when $a,b$ are taken larger and larger. We can make the sum of all lengths 
$|I_k|$ as small as needed by taking $a,b$ large enough. Hence,
$$
\dist_{C^2}(\tilde{\phi},\id)=O 
\left( |\eta|+|D\eta| +
\sum_{k=0}^{b-1} |I_k| \right)\le \epsilon
$$
when $a,b$ are large enough.
\comm{
The bound~\eqref{Qclose} follows from \eqref{gaps} and the Koebe lemma.  To
prove this, we need to show that the distortion of the diffeomorphic parts of
the renormalization shrinks to zero uniformly.  It suffices to consider the
distortion of $f^b: f_-^{-b}(C) \to C$ and $f^a: f_+^{-a}(C) \to C$.
The images of the monotone extension of these two branches cover the left and
right gaps.
Hence, \eqref{gaps} and the Koebe lemma shows that the distortion tends to
zero uniformly as $a,b\to\infty$.}
\end{proof}

The following three propositions are the tools we use to control the position
of the critical point of the consecutive renormalizations.

\begin{prop}\label{cprime}
There exists $\gamma>0$ such that for every $\epsilon>0$ and $K>0$ there exists
$N\ge 1$ such that if $a,b\ge N$,
$\frac{1}{K}\le \frac{a}{b}\le K$, and $f_{a,b}\in D_{a,b}$ is the full vertex,
then
\begin{equation}\label{newc}
    1-\epsilon
    \leq
    \frac{c'}{1-c'}
    \left(
        \frac{\hat{c}}{1-\hat{c}}
        \left(
            \gamma \frac{D\hat{f}(0)^b}{D\hat{f}(1)^a}
            \right)^{\frac 1 \alpha}
        \right)^{-1}
    \leq
    1+\epsilon,
\end{equation}
where $c'=\text{crit}(\opR f_{a,b})$, $\hat f$ is the full map and
$\hat c = \text{crit}(\hat f)$.
\end{prop}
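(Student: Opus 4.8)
The plan is to read off $c'=\operatorname{crit}(\opR f_{a,b})$ directly from the geometry of the renormalization interval $C=C^-\cup C^+$ of $f=f_{a,b}$. Since $\opR f$ is the first‑return map to $C$ rescaled affinely to $[0,1]$, and this return map is discontinuous exactly at $c=\operatorname{crit}(f)$, we have $c'=|C^-|/|C|$, so that
$$\frac{c'}{1-c'}=\frac{|C^-|}{|C^+|}.$$
Because $f$ is the full vertex, the first‑return branch $f^{a+1}|_{C^-}$ is onto $C$; I would factor it as $f^{a}|_{f(C^-)}\circ f_-|_{C^-}$, where the intermediate orbit $f(C^-),\dots,f^{a-1}(C^-)$ lies in $(c,1)$, and symmetrically $f^{b+1}|_{C^+}=f^{b}|_{f(C^+)}\circ f_+|_{C^+}$ with intermediate orbit to the left of $c$. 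All the nonlinearity coming from the critical exponent sits in the single branches $f_\pm|_{C^\pm}$, while the deep pieces $f^{a}|_{f(C^-)}$, $f^{b}|_{f(C^+)}$ are diffeomorphisms onto $C$.

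Next I would estimate $|C^\pm|$. Writing $f_-=\phi_-\circ q$ and Taylor expanding $\phi_-$ about $q(c)=u$ gives
$$f_-(c)-f_-(x)=A_-(f)\,(c-x)^\alpha\bigl(1+O((c-x)^\alpha)\bigr),\qquad A_-(f)=\phi_-'(u)\,u\,c^{-\alpha},$$
and symmetrically with $A_+(f)=\phi_+'(1-v)\,v\,(1-c)^{-\alpha}$. By Lemma~\ref{geobounds}, $|C^\pm|\to 0$ uniformly as $a,b\to\infty$, and by Lemma~\ref{fam depend}, $f_{a,b}\to\hat f$ in $C^2$ (in particular $c(f_{a,b})\to\hat c$), so $A_\pm(f_{a,b})\to A_\pm(\hat f)$; hence $|f(C^-)|=A_-(\hat f)\,|C^-|^\alpha(1+o(1))$ uniformly (with $a/b$ bounded), and likewise for $C^+$. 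For the deep branches, negative Schwarzian derivative together with the fact, implicit in Lemma~\ref{geobounds}, that the gaps adjacent to $C$ blow up relative to $|C|$, lets the Koebe Lemma bound the distortion of $f^{a}|_{f(C^-)}$, the bound tending to $1$; evaluating $Df^{a}$ at the preimage $f^{-a}(c)\in f(C^-)$ (in the notation of Lemma~\ref{gamma}) and invoking that lemma,
$$|C|=Df^{a}\bigl(f^{-a}(c)\bigr)\,|f(C^-)|\,(1+o(1))=\gamma_+\,D\hat f(1)^a\,|f(C^-)|\,(1+o(1)),$$
and symmetrically $|C|=\gamma_-\,D\hat f(0)^b\,|f(C^+)|\,(1+o(1))$. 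Combining, $|C^-|^\alpha=|C|\bigl(A_-(\hat f)\gamma_+D\hat f(1)^a\bigr)^{-1}(1+o(1))$ and $|C^+|^\alpha=|C|\bigl(A_+(\hat f)\gamma_-D\hat f(0)^b\bigr)^{-1}(1+o(1))$.

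Dividing these (the factor $|C|$ cancels) and taking $\alpha$‑th roots,
$$\frac{c'}{1-c'}=\frac{|C^-|}{|C^+|}=\left(\frac{A_+(\hat f)\,\gamma_-\,D\hat f(0)^b}{A_-(\hat f)\,\gamma_+\,D\hat f(1)^a}\right)^{1/\alpha}(1+o(1)).$$
I would then set $\gamma=\dfrac{A_+(\hat f)\,\gamma_-}{A_-(\hat f)\,\gamma_+}\left(\dfrac{1-\hat c}{\hat c}\right)^{\alpha}$, a positive constant depending only on the family — substituting the explicit $A_\pm(\hat f)$ shows the $\hat c$‑powers cancel, which is exactly why the statement isolates the factor $\hat c/(1-\hat c)$ — so that the displayed relation becomes $\dfrac{c'}{1-c'}=\dfrac{\hat c}{1-\hat c}\bigl(\gamma\,D\hat f(0)^b/D\hat f(1)^a\bigr)^{1/\alpha}(1+o(1))$. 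Choosing $N=N(\epsilon,K)$ so large that every $o(1)$ contribution is $\le\epsilon$ for $a,b\ge N$, $1/K\le a/b\le K$, gives the claimed double inequality. The real content — and the only place $\epsilon$‑closeness to standard maps, the negative Schwarzian condition, and the bounded ratio $a/b$ are genuinely used — is making every $o(1)$ uniform: uniform decay of $|C^\pm|$ controls the Taylor remainder in the critical expansion, uniform blow‑up of the gaps (via Koebe) controls the distortion of the deep branches, and Lemma~\ref{gamma} supplies the uniform asymptotics $Df^{a}(f^{-a}(c))\sim\gamma_+D\hat f(1)^a$ and $Df^{b}(f^{-b}(c))\sim\gamma_-D\hat f(0)^b$.
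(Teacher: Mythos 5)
Your proposal is correct and takes essentially the same approach as the paper. You decompose each branch of the first-return map into the power-law part coming from $q$, the diffeomorphic part $\phi_\pm$, and the deep composition $f^a$ (resp.\ $f^b$), then control each piece using Lemma~\ref{fam depend}, Lemma~\ref{dzetak}, Lemma~\ref{geobounds} with Koebe, and Lemma~\ref{gamma}, and finally read off $c'/(1-c')=|C^-|/|C^+|$; the paper does exactly this, merely phrasing the $\phi_\pm$ and $f^{a},f^{b}$ contributions via the mean-value theorem (with sample points $x_1,x_2,y_1,y_2$) rather than via your Taylor expansion and pointwise evaluation at $f^{-a}(c)$, $f^{-b}(c)$, and your constant $\gamma=\frac{A_+(\hat f)\gamma_-}{A_-(\hat f)\gamma_+}\bigl(\frac{1-\hat c}{\hat c}\bigr)^{\alpha}$ collapses, using $u=v=1$ for the full map, to the paper's $\gamma=\frac{\gamma_-}{\gamma_+}\frac{D\hat\phi_+(0)}{D\hat\phi_-(1)}$.
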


\newcommand\fixedarr[1]{\xrightarrow{\mathmakebox[1.5em]{#1}}}
\begin{proof}
Let $f = f_{a,b}$.  Since $\opR f$ is assumed to be full it follows that all of the
following maps are onto
\begin{gather*}
    C^- = C \cap [0,c) \fixedarr{q} q(C^-) \fixedarr{\phi_-} f(C^-) \fixedarr{f^a} C
    \\
    C^+ = C \cap (c,1] \fixedarr{q} q(C^+) \fixedarr{\phi_+} f(C^+) \fixedarr{f^b} C
\end{gather*}
By the mean-value theorem there exists $x_1 \in q(C_-)$, $x_2 \in f(C^-)$, $y_1 \in
q(C^+)$, $y_2 \in f(C^+)$, such that
\begin{align*}
    D\phi_-(x_1) &= \frac{\abs{f(C^-)}}{\abs{q(C^-)}}, &
    Df^a(x_2) &= \frac{\abs{C}}{\abs{f(C^-)}}, \\
    D\phi_+(y_1) &= \frac{\abs{f(C^+)}}{\abs{q(C^+)}}, &
    Df^b(y_2) &= \frac{\abs{C}}{\abs{f(C^+)}}. \\
\end{align*}
Putting all this together we arrive at
\[
    \frac{Df^b(y_2)}{Df^a(x_2)}
    =
    \frac{D\phi_-(x_1)}{D\phi_+(y_1)}
    \frac{\abs{q(C^-)}}{\abs{q(C^+)}}
    =
    \frac{D\phi_-(x_1)}{D\phi_+(y_1)}
    \frac{u}{v}
    \left( \frac{\abs{C^-}}{\abs{C^+}}
        \frac{1-c}{c} \right)^\alpha,
\]
where $u = \phi_-^{-1}(f_-(c))$ and $v = 1 - \phi_+^{-1}(f_+(c))$ by definition.
By Lemma~\ref{gamma}, Lemma \ref{geobounds}, and the Koebe Lemma the left-hand side approaches
\[
    \frac{D\hat f^b(0)}{D\hat f^a(1)} \frac{\gamma_-}{\gamma_+}
\]
and by Lemma \ref{dzetak} and Lemma \ref{fam depend} the right-hand side approaches
\[
    \frac{D\hat\phi_-(1)}{D\hat\phi_+(0)}
    \left( \frac{\abs{C^-}}{\abs{C^+}}
        \frac{1-\hat c}{\hat c} \right)^\alpha,
\]
where $\hat\phi_-$ and $\hat\phi_+$ denote the diffeomorphic parts of $\hat f$.
Let
\[
    \gamma = \frac{\gamma_-}{\gamma_+} \frac{D\hat\phi_+(0)}{D\hat\phi_-(1)}
\]
and note that $c'/(1-c') = \abs{C^-}/\abs{C^+}$ to finish the proof.
\end{proof}

The following constant is independent of the family. It only depends on the
critical exponent $\alpha>1$. Let
\begin{equation}\label{theta}
\theta_{\alpha} = 1 + \frac{\log 2}{\log 3\alpha}>1.
\end{equation}
Denote the integer part of a real number $x$ by $\floor{x}$.

\begin{prop}\label{flippingplus} Let $\hat{f}$ be the full map in the family.
If $\hat c=\text{crit}(\hat{f})\ge \frac{2}{3}$ then there exist
$\theta> \theta_\alpha>1$, and integers $N$, $n$ and $m$ such that
for every $a\ge N$ the following holds for the full vertex $f_{a,b}\in D_{a,b}$:
\begin{enumerate}
\item If
$
b=\floor{\theta a}-n,
$
then 
$
\frac{1}{8} \le \text{crit}(\opR f_{a,b})\le \frac{1}{3}.
$

\item If
$
b=\floor{\theta a}-m,
$
then
$
\text{crit}(\opR f_{a,b})\ge \frac{2}{3}.
$
\end{enumerate}
\end{prop}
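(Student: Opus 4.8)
The plan is to feed the estimate \eqref{newc} of Proposition~\ref{cprime} into an elementary arithmetic argument. Abbreviate $\lambda_0=D\hat f(0)$ and $\lambda_1=D\hat f(1)$. Since $\hat c\ge\frac23>\frac12$ and the family is close to standard maps, $\lambda_0$ is close to $\alpha/\hat c\in(\alpha,\tfrac{3\alpha}{2}]$ and $\lambda_1$ to $\alpha/(1-\hat c)\ge 3\alpha$, so $1<\lambda_0<\lambda_1$. In \eqref{newc} the only factor that can run to $0$ or $\infty$ as $a\to\infty$ is $\gamma\,\lambda_0^{\,b}/\lambda_1^{\,a}$, so to keep $c'$ bounded away from $0$ and $1$ we are \emph{forced} to choose $b=\floor{\theta a}-j$ with
\[
    \theta=\frac{\log\lambda_1}{\log\lambda_0}>1
\]
and $j\in\Z$ (any other growth rate of $b$ drives $b\log\lambda_0-a\log\lambda_1$ to $\pm\infty$, hence $c'$ to $0$ or $1$). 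With this choice $a/b\to1/\theta$, so we may fix $K=2\theta$; taking logarithms in \eqref{newc}, for every $\epsilon>0$ there is $N_\epsilon$ such that
\[
    \Bigl|\log\tfrac{c'}{1-c'}-A+\tfrac{\log\lambda_0}{\alpha}\bigl(\{\theta a\}+j\bigr)\Bigr|\le\epsilon
    \qquad(a\ge N_\epsilon),
\]
where $c'=\text{crit}(\opR f_{a,\floor{\theta a}-j})$, the constant $A=\log\frac{\hat c}{1-\hat c}+\frac1\alpha\log\gamma$ depends only on the family, and $\{x\}=x-\floor{x}$; here $b\log\lambda_0-a\log\lambda_1=(\floor{\theta a}-\theta a-j)\log\lambda_0=-(\{\theta a\}+j)\log\lambda_0$ by the choice of $\theta$. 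Thus $\log\frac{c'}{1-c'}$ lies, up to the error $\epsilon$, in an interval of length $\ell:=\frac{\log\lambda_0}{\alpha}$ (the contribution of $\{\theta a\}\in[0,1)$) whose left endpoint $A-\ell(j+1)$ ranges over the progression $A+\ell\Z$ of spacing $\ell$ as the integer $j$ varies.

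Next I would check that this forced $\theta$ really exceeds $\theta_\alpha$. Since $\theta$ is increasing in $\hat c$ — the numerator $\log\bigl(\alpha/(1-\hat c)\bigr)$ grows while the denominator $\log(\alpha/\hat c)$ shrinks — it suffices to treat $\hat c=\frac23$, where $\theta\approx\frac{\log 3\alpha}{\log(3\alpha/2)}$ (the error vanishing with the family-closeness). The inequality
\[
    \frac{\log 3\alpha}{\log(3\alpha/2)}>1+\frac{\log 2}{\log 3\alpha}=\theta_\alpha
\]
is equivalent to $(\log 3\alpha)^2>(\log 3\alpha)^2-(\log 2)^2$, so it holds for every $\alpha>1$ with a strictly positive margin; as $\theta_\alpha$ depends only on $\alpha$, taking the family close enough to standard maps (by an amount depending only on $\alpha$) secures $\theta>\theta_\alpha$.

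Now I would place the interval. For part (1) the window $c'\in[\frac18,\frac13]$ becomes, in the coordinate $\log\frac{c'}{1-c'}$, the interval $[-\log 7,-\log 2]$ of length $\log\frac72$. The decisive fact is the elementary estimate $\alpha^{-1}\log(3\alpha/2)<\tfrac12\log\tfrac72$, valid for all $\alpha>1$ (the left side peaks at $\tfrac3{2e}\approx0.55$ when $\alpha=\tfrac{2e}{3}$, whereas $\tfrac12\log\tfrac72\approx0.63$), which gives $2\ell<\log\frac72$ for a sufficiently close family. Since the sliding interval has length $\ell$, its left endpoint runs over the progression $A+\ell\Z$ of spacing $\ell$, and $\log\frac72-2\ell>0$ leaves a sub-window of length $>\ell$ inside $[-\log 7,-\log 2]$, that sub-window must contain a point of the progression; hence some integer $j=n$ places the (slightly $\epsilon$-enlarged) sliding interval inside $[-\log 7,-\log 2]$, and with this $n$ and $N:=N_\epsilon$ for a suitably small fixed $\epsilon$ we obtain $\frac18\le c'\le\frac13$ for every $a\ge N$. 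Part (2) is easier: $c'\ge\frac23$ amounts to $\log\frac{c'}{1-c'}\ge\log 2$ with no upper constraint, so it suffices to pick $j=m\in\Z$ small enough (possibly negative, which merely enlarges $b$) that $A-\ell(m+1)-\epsilon\ge\log 2$; then $c'\ge\frac23$ for all $a\ge N$.

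The main obstacle is purely quantitative. The parameter $\theta$ is rigidly pinned to $\log D\hat f(1)/\log D\hat f(0)$ by the requirement that the linear-in-$a$ term in \eqref{newc} cancel, and one must simultaneously verify that this forced $\theta$ beats $\theta_\alpha$ and that the windows $[\frac18,\frac13]$ and $[\frac23,1)$ — read in the coordinate $\log\frac{c'}{1-c'}$ — are wide enough relative to the irreducible oscillation $\ell=\frac1\alpha\log D\hat f(0)$, all while controlling how small the family-closeness and the error $\epsilon$ must be. Both requirements reduce to the two displayed elementary inequalities about $\alpha$, and the hypothesis $\hat c\ge\frac23$ is precisely what makes them hold with room to spare.
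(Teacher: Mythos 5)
Your argument is correct and is essentially the same as the paper's: you fix the same $\theta = \log D\hat f(1)/\log D\hat f(0)$, verify $\theta>\theta_\alpha$ by the same computation near $\hat c=\tfrac23$, and the window condition you impose ($2\ell<\log\tfrac72$ with $\ell=\tfrac1\alpha\log D\hat f(0)$) is algebraically identical to the paper's lower bound $(3\alpha/2)^{-2/\alpha}/2>1/7$ --- both reduce to the extremal value $\tfrac{3}{2e}$ of $\alpha^{-1}\log(3\alpha/2)$. The only difference is presentational: you run the pigeonhole step additively in the coordinate $\log\frac{c'}{1-c'}$ with an arithmetic progression of spacing $\ell$, while the paper does it multiplicatively by choosing $n$ so that $K\lambda^n$ lands in a prescribed window, $\lambda=D\hat f(0)^{-1/\alpha}$.
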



\begin{proof}
Define $\theta = \log\hat f'(1) / \log\hat f'(0)$.  We first prove that
$\theta > \theta_\alpha$.  Since $\hat c \geq 2/3$,
\[
    \theta
    =
    \frac{\log\big(D\hat\phi_+(1) \alpha / (1-\hat c)\big)}%
        {\log\big(D\hat\phi_-(0) \alpha / \hat c\big)}
    \geq
    \frac{\log\big(D\hat\phi_+(1) 3 \alpha\big)}%
        {\log\big(D\hat\phi_-(0) 3\alpha / 2\big)},
\]
where $\hat\phi_-$ and $\hat\phi_+$ are the diffeomorphic parts of $\hat f$.
The right-hand side approaches $\log(3\alpha)/\log(3\alpha/2)$ as $\hat f$
approaches standard maps.  But
\[
    \frac{\log(3\alpha)}{\log(3\alpha/2)}
    =
    \left( 1 - \frac{\log 2}{\log(3\alpha)} \right)^{-1}
    >
    1 + \frac{\log 2}{\log(3\alpha)} = \theta_\alpha,
\]
which proves that $\theta > \theta_\alpha$ for $\hat f$ close enough to
standard maps.

Statement (1) follows from Proposition~\ref{cprime} and our choice of $a$ and
$b$ as we now show.  Define $r_a$ by
$b = \floor{\theta a} - n = \theta a - r_a - n$.
Note that $0 \leq r_a < 1$, for all $a$.  The expression in parenthesis in (\ref{newc}) becomes
$$
\begin{aligned}
 \frac{\hat{c}}{1-\hat{c}}
        \left(
            \gamma \frac{D\hat{f}(0)^b}{D\hat{f}(1)^a}
            \right)^{\frac 1 \alpha}&=
             \frac{\hat c}{1-\hat c} \gamma^{\frac 1 \alpha}
        \left( \frac{D\hat f(0)^\theta}{D\hat f(1)} \right)^{\frac a \alpha}
        D\hat f(0)^{-\frac{n+r_a}{\alpha}}\\
    &= \frac{\hat c \gamma^{\frac 1 \alpha}}{1-\hat c} \lambda^{n+r_a}\\
    &= K \lambda^{n+r_a},
\end{aligned}
$$
where $\lambda = \hat f'(0)^{-1/\alpha}<1$.  Note that $\theta$ was chosen
exactly so that the dependence on $a$ is under control.  Fix $\epsilon>0$ and apply
Proposition~\ref{cprime} to get
\[
    (1-\epsilon) K \lambda^{n+1} \leq \frac{c'}{1-c'} \leq (1+\epsilon) K \lambda^n,
\]
where $c' = \text{crit}(\opR f_{a,b})$.
Now choose $n$ such that $\lambda/2 < (1+\epsilon)K\lambda^n \leq 1/2$ (which is
possible since $\{(\lambda^{k+1}/2,\lambda^k/2]\}_{k\in\mathbb{Z}}$ is a
partition of $\mathbb{R}$).  Note that $n$ does not depend on $a$.  Hence,
after fixing $n$ we are still free to choose $a$ (and consequently $b$) as
large as we like.  We get that
\begin{equation} \label{cbounds}
    \frac{1-\epsilon}{1+\epsilon} \frac{\lambda^2}{2}
    \leq
    \frac{c'}{1-c'}
    \leq
    \frac 1 2.
\end{equation}
Using the fact that $\hat c \geq 2/3$ we can estimate the lower bound by
\begin{equation} \label{lowerc}
    \frac{1-\epsilon}{1+\epsilon} \frac{\lambda^2}{2}
    =
    \frac 1 2 \frac{1-\epsilon}{1+\epsilon}
        \left(D\hat\phi_-(0) \frac{\alpha}{\hat c}\right)^{-\frac 2 \alpha}
    \geq
    \frac 1 2 \frac{1-\epsilon}{1+\epsilon}
        \left(D\hat\phi_-(0) \frac{3 \alpha}{2}\right)^{-\frac 2 \alpha}.
\end{equation}
The right-hand side approaches $g(\alpha) = (3\alpha/2)^{-2/\alpha}/2$ as
$\epsilon\to0$ and as $\hat f$ approaches standard maps.  It is easy to check
that $e^{-3/e} \leq 2 g(\alpha) < 1$ for all $\alpha>1$.  Hence, by choosing
$\epsilon$ small (which we are allowed to do by increasing $N$) and $\hat f$
close to standard maps we can ensure that
\eqref{lowerc} is larger than $1/7$ (since $e^{-3/e}/2 = 0.165\ldots$).
Finally, use that $x \mapsto x/(1-x)$ is increasing with inverse $y \mapsto
y/(1+y)$ and \eqref{cbounds} to get $1/8 \leq c' \leq 1/3$ as claimed.

Statement (2) follows by choosing $m$ such that
$(1-\epsilon) K \lambda^{m+1} \geq 2$.  Then $c'/(1-c') \geq 2$ and consequently
$c' \geq 2/3$.
\end{proof}

The following Proposition is the counterpart of Proposition \ref{flippingplus}. The proofs of both are essentially the same.

\begin{prop}\label{flippingmin}
Let $\hat{f}$ be the full map in the family.
If $\hat c=\text{crit}(\hat{f})\le \frac{1}{3}$ then there exist
$\theta> \theta_\alpha>1$, and integers $N$, $n$ and $m$ such that
for every $b\ge N$ the following holds for the full vertex $f_{a,b}\in D_{a,b}$:
\begin{enumerate}
\item If
$
a=\floor{\theta b}-n,
$
then 
$
\frac{2}{3} \le \text{crit}(\opR f_{a,b})\le \frac{7}{8}.
$

\item If
$
a=\floor{\theta b}-m,
$
then
$
\text{crit}(\opR f_{a,b})\le \frac{1}{3}.
$
\end{enumerate}
\end{prop}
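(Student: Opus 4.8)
The plan is to mirror the proof of Proposition~\ref{flippingplus} under the reflection symmetry $x \mapsto 1-x$ that interchanges the two branches of a Lorenz map, the roles of $a$ and $b$, the roles of $c$ and $1-c$, and the roles of the endpoints $0$ and $1$. Under this reflection, the hypothesis $\hat c \le \frac13$ becomes $1 - \hat c \ge \frac23$, which is exactly the hypothesis of Proposition~\ref{flippingplus} applied to the reflected family; and the conclusions $\frac23 \le \text{crit}(\opR f_{a,b}) \le \frac78$ and $\text{crit}(\opR f_{a,b}) \le \frac13$ become $\frac18 \le \text{crit} \le \frac13$ and $\text{crit} \ge \frac23$ for the reflected renormalization. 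So at the level of the statement, Proposition~\ref{flippingmin} is literally Proposition~\ref{flippingplus} for the reflected family. However, since the constants in Propositions~\ref{cprime}, \ref{flippingplus} depend on the family, and we want a statement about a fixed (non-reflected) family, I would instead redo the argument directly, keeping track of the correct asymmetric quantities.

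Concretely, I would first set $\theta = \log \hat f'(0) / \log \hat f'(1)$ (the reciprocal of the $\theta$ in Proposition~\ref{flippingplus}) and show $\theta > \theta_\alpha$ using $\hat c \le \frac13$, hence $1 - \hat c \ge \frac23$, in the estimate
\[
    \theta
    =
    \frac{\log\big(D\hat\phi_-(0)\, \alpha / \hat c\big)}
        {\log\big(D\hat\phi_+(1)\, \alpha / (1-\hat c)\big)}
    \geq
    \frac{\log\big(D\hat\phi_-(0)\, 3\alpha\big)}
        {\log\big(D\hat\phi_+(1)\, 3\alpha/2\big)},
\]
which approaches $\log(3\alpha)/\log(3\alpha/2) > \theta_\alpha$ as $\hat f$ approaches standard maps, exactly as before. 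Next, I would invoke Proposition~\ref{cprime} and rewrite the expression in parentheses in \eqref{newc}: with $a = \floor{\theta b} - n = \theta b - r_b - n$ and $0 \le r_b < 1$, the term $\gamma\, D\hat f(0)^b / D\hat f(1)^a$ becomes, after factoring out the $a$-dependence via the choice of $\theta$, a quantity of the form $K' \mu^{n + r_b}$ where now $\mu = \hat f'(1)^{-1/\alpha} < 1$ and $K'$ is a family-dependent constant involving $\hat c/(1-\hat c)$, $\gamma$, etc. The only bookkeeping subtlety is that \eqref{newc} is phrased in terms of $c'/(1-c')$ and the exponent $1/\alpha$ sits outside, so one must be careful that the reflected quantity controlled is $(1-c')/c'$; choosing $n$ so that $\mu/2 < (1+\epsilon) K' \mu^n \le 1/2$ gives $(1-c')/c' \le 1/2$, i.e.\ $c' \ge 2/3$, and the lower bound $(1-c')/c' \ge \frac{1-\epsilon}{1+\epsilon}\frac{\mu^2}{2}$, which by the same $e^{-3/e}$ computation exceeds $1/7$, yields $c' \le 7/8$. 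For statement (2), choose $m$ with $(1-\epsilon) K' \mu^{m+1} \ge 2$, so $(1-c')/c' \ge 2$, i.e.\ $c' \le 1/3$.

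The main thing to be careful about — rather than a genuine obstacle — is the consistent tracking of which of $c'$, $1-c'$, $0$, $1$, $a$, $b$ plays which role after reflection, since a single sign or reciprocal slip propagates through \eqref{newc} and inverts all the inequalities. I do not expect any new analytic input beyond what is already used in Proposition~\ref{flippingplus}: Proposition~\ref{cprime} (which is itself symmetric in the two branches up to the definition of $\gamma$), Lemma~\ref{dzetak}, Lemma~\ref{gamma}, Lemma~\ref{geobounds}, and the elementary monotonicity of $x \mapsto x/(1-x)$. Accordingly I would simply write: ``By applying the reflection $x \mapsto 1-x$, this reduces to Proposition~\ref{flippingplus}; alternatively, repeat that proof with the roles of $0$ and $1$, of $c$ and $1-c$, and of $a$ and $b$ interchanged,'' and then indicate the two or three places where the substitution $\theta \rightsquigarrow 1/\theta$, $\lambda \rightsquigarrow \mu = \hat f'(1)^{-1/\alpha}$, and $c'/(1-c') \rightsquigarrow (1-c')/c'$ must be made.
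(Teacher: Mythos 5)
Your proposal is correct and matches the paper's intent exactly: the paper gives no separate proof of Proposition~\ref{flippingmin}, saying only that it ``is the counterpart of Proposition~\ref{flippingplus}'' and that ``the proofs of both are essentially the same'', and your reflection argument together with the direct rederivation (interchanging $0\leftrightarrow 1$, $a\leftrightarrow b$, $c'\leftrightarrow 1-c'$, and replacing $\theta\rightsquigarrow \log\hat f'(0)/\log\hat f'(1)$, $\lambda\rightsquigarrow\mu=\hat f'(1)^{-1/\alpha}$) is precisely what those words mean. One small instance of the reciprocal slip you yourself flagged as the main pitfall: once you invert \eqref{newc} to control $(1-c')/c'$ rather than $c'/(1-c')$, the $(1\pm\epsilon)$ factors also invert, so the defining condition on $n$ should be $\mu/2 < K'\mu^n/(1-\epsilon)\le 1/2$, not $(1+\epsilon)K'\mu^n\le 1/2$; as written you would only obtain $(1-c')/c'\le \tfrac{1}{2(1-\epsilon^2)}$, hence $c'\ge 2/3 - O(\epsilon^2)$ rather than $c'\ge 2/3$. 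The lower bound $\tfrac{1-\epsilon}{1+\epsilon}\tfrac{\mu^2}{2}$ you quote is nonetheless correct, and with the $n$-condition adjusted the remainder of the argument goes through verbatim.
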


\comm{
Let $\theta = \hat f'(0) / \hat f'(1)$.
Let $\lambda = \hat f'(1)^{1/\alpha}$.
Choose $n$ such that $2 \leq K \lambda^n < 2\lambda$.
Then $c'/(1-c') \leq 2 e^{3/e} < 7$.
}

\newcommand\mone{m'} 
\newcommand\mtwo{m} 
\newcommand\mthree{n'}

\section{The Construction of the Example}\label{construc}

In this section we will fix an analytic monotone family $F$ of Lorenz maps with parameter domain $\Dom(F)=[0,1]^2$. Let $\frac{1}{10}\ge \epsilon_0>0$ be small enough such that the results from  \S\ref{critpts}, Proposition \ref{flippingplus} and Proposition \ref{flippingmin}, can be applied to families consisting of maps closer than $\epsilon_0$ to the standard maps. Moreover, let $K_n\to \infty$ be the sequence given by Lemma \ref{ergodicmeasure} for $\epsilon=\epsilon_0$. 

The construction of the example will be by induction. It will produce a nested sequence of full islands.
A full island $D\subset [0,1]^2$ is said to satisfy property
$\mathcal{P}(n,\epsilon)$ if the following holds:
\begin{itemize}
\item $P_1(n)$: Every map $f\in D$ is $n$ times renormalizable of consecutive
    types $(a_k, b_k)$, $k\le n$, with $a_k,b_k\ge K_k$.

\item $P_2(n)$: $\opR^nf$ is $\epsilon_0$--close to standard maps, for all
    $f\in D$.

\item $P_3(n)$: The return interval and its adjacent gaps satisfy
$$
\frac{|C_n(f)|}{|G^\pm_n(f)|}\le \epsilon_0,\quad \forall f\in D.
$$

\item $P_4(n)$: The full map $\hat{f}\in D$ satisfies
    $\text{crit}(\opR^n\hat{f})\ge \frac23$,

\item $P_5(n, \epsilon)$: For every $f$ in some neighborhood $U\subset D$ of
    the full map $\hat f$ there exists $X_n(f)$ which is a finite collection of
    pairwise disjoint monotone preimages of $C_n(f)$ corresponding to
    $X_n(\hat{f})$, such that $|X_n(f)|\ge \frac12+\epsilon$.
\end{itemize}
Note that by abuse of notation we write $X_n(f)$ to denote both a collection of
intervals as well as the union of that collection.  Hence $\abs{X_n(f)}$
denotes the Lebesgue measure of $X_n(f)$ as a subset of $[0,1]$ and $I \in
X_n(f)$ denotes an interval in the collection.

We say that $D$ satisfies property $\mathcal{P}^+(n,\epsilon)$ if it
satisfies $\mathcal{P}(n,\epsilon)$ and
\begin{itemize}
\item $P_6^+(n)$: For every $x\in X_n(f)$ there exists $t\ge n$ such that
 $$
 \tfrac{1}{t}  \# \{ i<t| f^i(x)>\text{crit}(f)\}\ge \tfrac34.
 $$
\end{itemize}
Similarly, $D$ satisfies $\mathcal{P}^-(n,\epsilon)$ if it satisfies
$\mathcal{P}(n,\epsilon)$ and
\begin{itemize}
\item $P_6^-(n)$: For every $x\in X_n(f)$ there exists $t\ge n$ such that
 $$
 \tfrac{1}{t}  \# \{ i<t| f^i(x)<\text{crit}(f)\}\ge \tfrac34.
 $$
\end{itemize}

The following proposition is the key ingredient during the inductive
construction.

\begin{prop}\label{flipplus}
Let $D_0\subset \Dom(F)$ be a full island which satisfies
$\mathcal{P}(n,\epsilon)$ with $\epsilon\le \epsilon_0$.
Then for every $\epsilon' < \epsilon$  there exists a full island
$D\subset D_0$ which satisfies $\mathcal{P}^+(\mthree,\epsilon')$, for some
$\mthree>n$, and $X_{\mthree}(f)\subset X_n(f)$ for every $f\in D$.
Furthermore, it is possible to choose $D$ so that it satisfies
$\mathcal{P}^-(\mthree,\epsilon')$
instead of
$\mathcal{P}^+(\mthree,\epsilon')$.
\end{prop}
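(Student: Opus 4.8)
Here is how I would attack Proposition~\ref{flipplus}.

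\smallskip

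\noindent\textbf{Overall idea.} I treat only $\mathcal P^+$; the case of $\mathcal P^-$ is obtained by reflecting the entire argument in the critical point (equivalently, by changing the parity of $\mthree$ by one). The plan is to renormalize $D_0$ several more times so as to manufacture, at some level $\mthree>n$, a long stretch of dynamics that is strongly concentrated to the right of the critical point, and then to take $X_{\mthree}$ to consist of (almost all of) the monotone preimages of $C_{\mthree}$ lying inside $X_n$. On its way down to $C_{\mthree}$ such a preimage will spend most of its itinerary following that concentrated stretch, and reading this off the word $\omega_J$ will immediately give $P_6^+$ with $t=e_J$.

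\smallskip

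\noindent\textbf{Adding the levels.} Starting from the full map $\hat f_0\in D_0$ with $\text{crit}(\opR^n\hat f_0)\ge\frac23$, I renormalize $\mthree-n$ further times, at each step passing to a full island and its full vertex (cf.\ \cite{MM}); all the new types $(a_k,b_k)$, $n<k\le\mthree$, are taken $\ge K_k$ and arbitrarily large. At the intermediate levels I alternate Proposition~\ref{flippingplus}(1) and Proposition~\ref{flippingmin}(1), so that $\text{crit}(\opR^k\hat f)$ oscillates between $[\frac18,\frac13]$ and $[\frac23,\frac78]$ (inserting one extra $\text{crit}\ge\frac23$ step first when $n$ is odd, to fix the parity), and at the last level I use Proposition~\ref{flippingplus}(2) so that $\text{crit}(\opR^{\mthree}\hat f)\ge\frac23$; I also take $a_{\mthree},b_{\mthree}$ large enough that Lemma~\ref{geobounds} yields $P_2(\mthree)$ and $P_3(\mthree)$. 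Since each such step forces $b_k/a_k\approx\theta$ or $a_k/b_k\approx\theta$, the recursion $T_{k+1}=W_k^{t}T_k$ multiplies the ratio $T_k^{+}/T_k^{-}$ by $\approx\theta^{\pm2}$ at each pair of steps; hence by taking $\mthree-n$ large (depending on $D_0$) I can make the penultimate return times as asymmetric as I wish, $T_{\mthree-1}^{\sigma}/T_{\mthree-1}^{-\sigma}$ huge, where $\sigma\in\{+,-\}$ is chosen so that, by Lemma~\ref{ergodicmeasure}, $\Lambda_{\mthree-1}^{\sigma}$ is the $(n'{-}1)$-th level cycle that is $(1-\epsilon_0)$--concentrated on $[c,1]$ ($\sigma=+$ if $\mthree-1$ is even, $\sigma=-$ if odd — a short parity bookkeeping shows the asymmetry I can produce is exactly the one matching this $\sigma$). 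Let $D\subset D_0$ be the resulting nested full island and $\hat f$ its full vertex; this already delivers $P_1(\mthree)$ and $P_4(\mthree)$.

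\smallskip

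\noindent\textbf{The set $X_{\mthree}$ and the final estimate.} As $\mathcal C_{\mthree}$ refines $\mathcal C_n$ (Lemma~\ref{C}) and $\bigcup\mathcal C_{\mthree}$ has full measure, the monotone preimages of $C_{\mthree}(\hat f)$ contained in $\bigcup X_n(\hat f)$ fill $\bigcup X_n(\hat f)$ up to a null set. For such an interval $J$ with word $\omega_J\in\{L,R\}^{e_J}$, the orbit $x,\dots,\hat f^{e_J}(x)$ enters $C_{\mthree-1}$ and wanders there before descending into $C_{\mthree}$; using Lemma~\ref{measurebalance} (the uniform lower bound $w$ on the mass of either branch of a deep, nearly-standard renormalization) together with the Koebe principle and $P_2$--$P_3$, I would show that, apart from an exceptional family of preimages of exponentially small total measure (those whose orbit lingers too long on the short branch of $\opR^{\mthree-1}\hat f$, or that reach $C_{\mthree}$ too quickly), the time spent following $\Lambda_{\mthree-1}^{\sigma}$ exceeds $(1-\eta)e_J$, where $\eta\to0$ as the removal depth and the asymmetry grow and where one also uses that the return times grow with the level, so that scale $\mthree-1$ dwarfs all shallower ones. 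By Lemma~\ref{ergodicmeasure} every $\Lambda_{\mthree-1}^{\sigma}$--block has at least a $(1-\epsilon_0)$--fraction of its symbols equal to $R$, so $\omega_J$ has at least $(1-\eta)(1-\epsilon_0)\ge\frac34$ symbols equal to $R$ (recall $\epsilon_0\le\frac1{10}$). Let $X_{\mthree}(\hat f)$ be a finite subcollection of these good preimages with $\abs{X_{\mthree}(\hat f)}\ge\frac12+\epsilon'$ — possible, with the removal depth large, since $\epsilon'<\epsilon$ and $\abs{\bigcup X_n(\hat f)}\ge\frac12+\epsilon$ — and let $X_{\mthree}(f)$ be the corresponding collection for $f$ in a neighbourhood of $\hat f$ (Lemma~\ref{correspond}); this is $P_5(\mthree,\epsilon')$, and $\bigcup X_{\mthree}(f)\subset\bigcup X_n(f)$. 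Finally, for $x\in X_{\mthree}(f)$ lying in $J\in X_{\mthree}(f)$ the word $\omega_J$ is one of the good words, so $f^i(x)>\text{crit}(f)$ for at least $\frac34 e_J$ of the $i<e_J$; taking $t=e_J\ge\mthree$ gives $P_6^+(\mthree)$. Hence $D$ satisfies $\mathcal P^+(\mthree,\epsilon')$, and the mirror construction gives $\mathcal P^-(\mthree,\epsilon')$.

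\smallskip

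\noindent\textbf{Where the difficulty lies.} The delicate step is the estimate in the last paragraph. One must keep the geometry of \emph{all} the added renormalizations under control — which by \S\ref{critpts} ties $b_k$ to $a_k$ through $\theta$ and hence pins the per-step change of $T^{+}/T^{-}$ — while still arranging that the penultimate asymmetry overwhelms every ``wrong-concentrated'' and every shallow-level contribution to the itinerary, and doing this so robustly that \emph{every} point of $X_{\mthree}(f)$ (not merely a.e.\ point) works. That robustness is precisely what forces one to excise the exponentially small exceptional set and to estimate its measure by an honest Koebe/distortion argument, using that all the maps in play are $\epsilon_0$--close to standard.
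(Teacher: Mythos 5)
Your overall architecture matches the paper's: alternate Propositions~\ref{flippingplus} and~\ref{flippingmin} to control critical points and to pump up the asymmetry of the return times, then pass to a refined collection of monotone preimages, shave off a small exceptional subcollection, and finally count symbols. The points where you need $\theta>\theta_\alpha>1$ to gain a definite factor at each pair of levels, the use of Lemma~\ref{correspond} to transfer the collection across a small neighborhood, and the parity bookkeeping are all in line with the paper.

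The genuine gap is in the last step, where you set $t=e_J$ and try to control the \emph{entire} word $\omega_J$. Decompose the orbit of $x\in J$ by its excursions in $C_m$ (for a penultimate even level $m$ with crit in $[\frac18,\frac13]$): each level-$m$ iterate contributes a block of length $T^+_m$ or $T^-_m$, and inside a $+$-block the $R$-fraction is $\ge 1-\epsilon_0$ (Lemma~\ref{ergodicmeasure}, $m$ even), inside a $-$-block only $\le\epsilon_0$. The $R$-fraction of the whole prefix is then roughly
\[
\frac{\#^+\,T^+_m(1-\epsilon_0)}{\#^+T^+_m+\#^-T^-_m+O(E)},
\]
which is close to $1-\epsilon_0$ provided $T^+_m/T^-_m\ge\kappa$ and the ratio $\#^-/\#^+$ is bounded. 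The issue: you have no control over $\#^-/\#^+$ for the \emph{whole} level-$m$ itinerary from $C_m$ down to $C_{\mthree}$. Lemma~\ref{measurebalance} and Koebe only tell you about the acim, not about finite itineraries; "exponentially small exceptional set'' is a large-deviation bound you have not justified and which none of the cited lemmas supply. A preimage $J$ whose level-$m$ word happens to start with a long string of $-$'s will simply fail your $\tfrac34$ estimate, and you have given no reason such $J$ have small measure after descending deeply enough.

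The paper sidesteps this entirely: it applies the Birkhoff Ergodic Theorem to the (nearly standard, full) level-$m$ renormalization $\opR^m_0\hat f$ to produce a \emph{fixed} time $T_0$ and a set $Z_{T_0}$ of relative measure $\ge 1-r/2$ in $C_m$ whose first $T_0$ level-$m$ iterates have $+$-fraction $\ge w/2$; it then chooses $a,b$ so large that almost all monotone preimages of $C_{\mthree}$ inside $C_m$ have level-$m$ transfer time $\ge T_0$, and finally defines $t=\#^+T^+_m+\#^-T^-_m+e(x)$, a prefix length $\le e_J$ determined by those first $T_0$ level-$m$ steps only. Since $P_6^+$ merely requires \emph{some} $t\ge\mthree$, this intermediate time is exactly what is needed, and no control over the rest of $\omega_J$ is required. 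To repair your proposal, replace the $t=e_J$ claim with the paper's intermediate-time device (or prove a genuine large-deviation estimate for the level-$m$ return map, which is extra work the paper avoids), and replace the appeal to Lemma~\ref{measurebalance} plus Koebe by the Ergodic Theorem applied at level $m$.
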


\begin{proof} The proof has four parts. The first part is a preparation.
Note that the $n^\text{th}$ level probability basis measures behave differently
depending on whether $n$ is even or odd.

Assume first that $n\ge 1$ is odd. Consider the family $F_0: D_0\ni f\mapsto \opR^nf$. Apply Lemma \ref{fam depend},  Lemma \ref{geobounds} and Proposition \ref{flippingplus}(1) to obtain $a,b$ large enough such that the full island $D_{a,b}\subset U_0\subset D_0$ satisfies
$P_1(n+1)$, $P_2(n+1)$, $P_3(n+1)$, and $\text{crit}(\opR^{n+1}\hat{f}_{a,b})\le \frac13$, where $\hat{f}_{a,b}\in D_{a,b}$ is the full map. 

Apply Lemma \ref{C} to obtain a finite collection of monotone preimages of $C_{n+1}(\hat{f}_{a,b})$, the intervals are taken from  $\mathcal{C}_{n+1}(\hat{f}_{a,b})$, with the following properties. First denote the union by $X_{n+1}(\hat{f}_{a,b})$. Then $X_{n+1}(\hat{f}_{a,b})\subset X_n(\hat{f}_{a,b})$, and 
$$
|X_{n+1}(\hat{f}_{a,b})|> \frac{1}{2}+\epsilon'+\frac{\epsilon-\epsilon'}{2}.
$$
Lemma \ref{correspond} allows us to  choose a small enough neighborhood $U_{a,b}$ of $\hat{f}_{a,b}\in D_{a,b}\subset D_0$ such that $P_5(n+1, \epsilon'+(\epsilon-\epsilon')/2)$ also holds for every $f\in U_{a,b}$. The words describing the combinatorics of the intervals in $X_{n+1}(f)$, $f\in U_{a,b}$ are the same as the word of the corresponding interval in
$X_{n+1}(\hat{f}_{a,b})\subset X_n(\hat{f}_{a,b})$. This implies that $X_{n+1}(f)\subset X_n(f)$, for every $f\in U_{a,b}$.

Similarly, in the case when $n\ge 2$ is even we can apply Proposition \ref{flippingplus}(1) and Proposition \ref{flippingmin}(2) to turn the condition $P_4(n)$ into  $\text{crit}(\opR^n\hat{f}_{a,b})\le \frac13$.

Hence, we may assume that $\mone>n$ is {\it even} and $P_1(\mone)$,
$P_2(\mone)$, $P_3(\mone)$,
$\text{crit}(\opR^n\hat{f}_{a,b})\le \frac13$, and
$P_5(\mone, \epsilon'+(\epsilon-\epsilon')/2)$.  From now on we will consider
even renormalization levels.

\medskip

In the second part we show that by going to a sufficiently deep level we can
control the ratio of the return times of the first-return map.
We will alternate between applying Proposition \ref{flippingmin}(1) and Proposition \ref{flippingplus}(1) and then repeat the process. This will give rise to a nested sequence of full islands
$D_0\supset D_1\supset D_2\supset \cdots$.  Each time we will choose $a, b$ large enough such that each island $D_k$ satisfies $P_1(\mone+k)$, $P_2(\mone+k)$, $P_3(\mone+k)$. This is possible because of Lemma \ref{fam depend} and Lemma \ref{geobounds}.

Recall, in general
$$
\frac{T^-_{k+1}}{T^+_{k+1}}=\frac{a_{k+1}T^+_{k}+T^-_{k}}{b_{k+1}T^-_{k}+T^+_{k}}.
$$
For very large choices $a, b$ when applying Proposition \ref{flippingmin}(1) we have
$$
\frac{T^-_{2k+1}}{T^+_{2k+1}}\approx \frac{a_{2k+1}}{b_{2k+1}} \cdot 
\frac{T^+_{2k}}{T^-_{2k}}\approx \theta \cdot 
\frac{T^+_{2k}}{T^-_{2k}}\ge 
\theta_\alpha \cdot \frac{T^+_{2k}}{T^-_{2k}},
$$
where $\theta_\alpha>1$.
Similarly, for large choices of $a, b$ when applying Proposition \ref{flippingplus}(1) we have
$$
\frac{T^+_{2k+2}}{T^-_{2k+2}}\ge 
\theta_\alpha \cdot \frac{T^-_{2k+1}}{T^+_{2k+1}}.
$$
Hence, the repeated pairwise application of Proposition \ref{flippingmin}(1) and Proposition \ref{flippingplus}(1) implies, when $a,b$ are chosen large enough each time, that
$$
\frac{T^+_{2k+2}}{T^-_{2k+2}}\ge 
\theta^2_\alpha \cdot \frac{T^+_{2k}}{T^-_{2k}}.
$$
Choose $\mtwo=\mone+2k$ large enough such that
\begin{equation}\label{Tratio}
\frac{T^+_{\mtwo}}{T^-_{\mtwo}}\ge \kappa\gg1,
\end{equation}
where $\kappa$ is a very large number to be determined later.  Note that $\mtwo$ is
even.

\medskip

The third part of the proof constructs a large set of points given by $P_5$
which later will be shown to behave according to $P_6^+$.  Let
$\hat{f}_{\mtwo}\in D_{\mtwo}$ be the full map. Observe,
\begin{equation}\label{cpos}
\text{crit}(\opR^{\mtwo}\hat{f}_{\mtwo})\in \left[\tfrac18, \tfrac13\right].
\end{equation}

Consider the collection $\mathcal{C}_{\mtwo}(\hat{f}_{\mtwo})$ of monotone preimages of $C_{\mtwo}(\hat{f}_{\mtwo})$. From Lemma \ref{C} we get 
$|\mathcal{C}_{\mtwo}(\hat{f}_{\mtwo})|=1$ and $\mathcal{C}_{\mtwo}(\hat{f}_{\mtwo})$ is a refinement of $\mathcal{C}_n(\hat{f}_{\mtwo})$ since $\mtwo>n$. Choose a finite subcollection  $X_{\mtwo}(\hat{f}_{\mtwo})\subset 
\mathcal{C}_{\mtwo}(\hat{f}_{\mtwo})$ contained in $X_n(\hat{f}_{\mtwo})\subset \mathcal{C}_n(\hat{f}_{\mtwo})$ such that
$$
|X_{\mtwo}(\hat{f}_{\mtwo})|\ge \frac{1}{2}+\epsilon'+\frac{\epsilon-\epsilon'}{4}. 
$$
For every connected component $I\subset X_{\mtwo}(\hat{f}_{\mtwo})$ there is $e_I\ge 0$ such that
$$
\hat{f}_{\mtwo}^{e_I}: I\to C_{\mtwo}(\hat{f}_{\mtwo})
$$
is monotone and onto. Let $E=\max \{e_I\}$.

For $f\in D_{\mtwo}$ close enough to $\hat{f}_{\mtwo}$ let $X_{\mtwo}(f)$ be the corresponding collection of monotone preimages of $C_{\mtwo}(f)$. Here we used Lemma \ref{correspond}. If the neighborhood $U^{(0)}\subset D_{\mtwo}$ of $\hat{f}_{\mtwo}$  from which $f$ is chosen is small enough then
$$
|X_{\mtwo}(f)|\ge \frac{1}{2}+\epsilon'+\frac{\epsilon-\epsilon'}{8}, 
$$
for every $f\in U^{(0)}$.

Let $f\in U^{(0)}$ and $Z\subset C_{\mtwo}(f)$. The pullback of $Z$ is the set
$$
P_f(Z)=\bigcup_{I \in X_{\mtwo}(f)} f^{-e_I}(Z)\cap I \subset X_{\mtwo}(f).
$$
Then there exists $r>0$, which only depends on $\epsilon_0$,  such that if $|Z|/|C_{\mtwo}(f)|\ge 1-r$ then
\begin{equation}\label{PZbound}
|P_f(Z)|\ge  \frac{1}{2}+\epsilon'+\frac{\epsilon-\epsilon'}{16}.
\end{equation}
The reason is that all the branches $f^{e_I}:I\to C_{\mtwo}(f)$ have a uniformly bounded distortion as a consequence of $P_3(\mtwo)$ and the Koebe Lemma~\cite{MS}.

The renormalization of $\hat{f}_{\mtwo}$ is a full map closer than $\epsilon_0$ to the standard full map and has a critical point in a controlled  interval, see (\ref{cpos}). From Lemma \ref{measurebalance} we get a lower bound $w>0$ on the weight of the (ergodic) absolutely continuous invariant measure on both sides of the critical point. 

Let $T\ge 1$ and consider
$$
Z_T=\left\{ x\in C_{\mtwo}(\hat{f}_{\mtwo}) \,\middle|\,
\tfrac{1}{T}\#\{i<T\mid (\opR^{\mtwo}_0\hat{f}_{\mtwo})^i(x)>\text{crit}(\hat{f}_{\mtwo})\}\ge \tfrac12 w \right\},
$$
where $\opR^{\mtwo}_0\hat{f}_{\mtwo}:C_{\mtwo}(\hat{f}_{\mtwo})\to C_{\mtwo}(\hat{f}_{\mtwo})$ is the non-rescaled version of the $\mtwo^\text{th}$ renormalization. The Ergodic Theorem allows us to choose an arbitrarily large 
$$
T_0\ge \kappa E
$$  
such that
$$
\frac{|Z_{T_0}|}{|C_{\mtwo}(\hat{f}_{\mtwo})|}\ge 1-\frac12 r.
$$
The large constant $\kappa$ will be chosen later.
Observe, $Z_{T_0}$ is a finite collection of monotone onto branches $I$ of 
$(\opR^{\mtwo}_0\hat{f}_{\mtwo})^{T_0}: I\to C_{\mtwo}(\hat{f}_{\mtwo})$. For maps in a small enough neighborhood $U^{(1)}\subset U^{(0)}\subset D_{\mtwo}$,  let $Z_{T_0}(f)$ be the corresponding collection of monotone preimage under $\opR^{\mtwo}_0f$.  If the neighbor $U^{(1)}$ is small enough then 
$$
\frac{|Z_{T_0}(f)|}{|C_{\mtwo}(f)|}\ge 1-\frac34 r.
$$
Observe, if $I\subset Z_{T_0}(f)$ is a connected component then
$$
\tfrac{1}{T_0}\#\{i<T_0| (\opR^{\mtwo}_0f)^i(I)>\text{crit}(f)\}\ge \tfrac12 w.
$$

Let $\mthree=\mtwo+1$ and note that $\mthree$ is odd. Consider a full island $D_{a,b}\subset U^{(1)}\subset D_{\mtwo}$ of the family $F_{\mtwo}: D_{\mtwo}\ni f\mapsto \opR^{\mtwo}f$.

For $f\in D_{a,b}$ let $\mathcal{M}_{a,b}(f)\subset 
\mathcal{C}_{\mtwo}(f)$ be all monotone preimages $I$ of $C_{\mthree}(f)$ which are contained in $ C_{\mtwo}(f)$, and with transfer time $e_I\ge T_0$.
From Lemma \ref{geobounds}, (\ref{gaps}),  we get that $|C_{\mthree}(f)|\to 0$ when $a,b\to \infty$. Moreover, there are only finitely many monotone preimages with transfer time smaller than $T_0$. Hence,
\begin{equation}\label{measZTab}
\frac{|\mathcal{M}_{a,b}(f)|}{|C_{\mtwo}(f)|}\to 1,
\end{equation}
when $f\in D_{a,b}$ and $a,b\to \infty$. Let 
$$
Z_{T_0, a,b}(f)=\{ I \in \mathcal{M}_{a,b}(f)| I\subset Z_{T_0}(f)\}.
$$
From (\ref{measZTab}) we get
\begin{equation}\label{zbound}
\frac{|Z_{T_0, a,b}(f)|}{|C_{\mtwo}(f)|}\ge  1-\frac{7}{8} r,
\end{equation}
when $f\in D_{a,b}$ and $a,b$ large enough.  

Define $D_{\mthree}=D_{a,b}\subset D_{\mtwo}$ by choosing $a,b$ according to Proposition \ref{flippingmin}(1) but also large enough to have the above estimate (\ref{measZTab}) and the result from Lemma \ref{geobounds} for $\epsilon=\epsilon_0$. The full island $D_{\mthree}$ satisfies
$P_1(\mthree)$, $P_2(\mthree)$, $P_3(\mthree)$, and $P_4(\mthree)$.

For a map
$f\in D_{\mthree}$ let
$$
X_{\mthree}(f)=P_f(Z_{T_0, a,b}(f))\subset X_{\mtwo}(f).
$$
From the estimates (\ref{PZbound}), and (\ref{zbound}) we obtain that this set
satisfies $P_5(\mthree,\epsilon')$. Moreover,
\begin{equation}\label{contained}
X_{\mthree}(f)\subset X_{\mtwo}(f)\subset X_n(f),
\end{equation}
a property  which is part of the proposition.

\medskip

So far we did not yet discuss the statistical behavior of the maps, properties
$P_6^\pm$.
The fourth part will show $P_6^+(\mthree)$. Take $x\in X_{\mthree}(f)$. There exists $E\ge e(x)\ge 1$ with
$$
f^{e(x)}(x)\in C_{\mtwo}(f),
$$
and
$$
\tfrac{1}{T_0}  \#\{i<T_0\mid (\opR^{\mtwo}_0f)^i(f^{e(x)}(x))> \text{crit}(f)\}\ge \tfrac12 w.
$$
Let
$$
\#^+=\#\{i<T_0\mid (\opR^{\mtwo}_0f)^i(f^{e(x)}(x))> \text{crit}(f)\}
$$
and 
$$
\#^-=\#\{i<T_0\mid (\opR^{\mtwo}_0f)^i(f^{e(x)}(x))< \text{crit}(f)\}.
$$
Observe, 
$$
\frac{\#^+}{T_0}\ge \frac12 w,
$$
and, using $\#^++\#^-=T_0$,
$$
\frac{\#^-}{\#^+}\le \frac{2}{w}-1.
$$
Let
$$
t=\#^+\cdot T^+_{\mtwo}+\#^-\cdot T^-_{\mtwo}+e(x).
$$
Let $C_0^+(f)=[\text{crit}(f),1]$ and $C_0^-(f)=[0,\text{crit}(f)]$. Then 
$$
\begin{aligned}
\tfrac{1}{t} \#\{i<t&\mid f^i(x)> \text{crit}(f)\}\\
&\ge
\frac{\#^-\cdot  T^-_{\mtwo}\cdot \mu_{\mtwo}^-(C_0^+(f))+\#^+\cdot T^+_{\mtwo}\cdot \mu_{\mtwo}^+(C_0^+(f))}{\#^+\cdot T^+_{\mtwo}+\#^-\cdot T^-_{\mtwo}+e(x)}\\
&\ge
\frac{\mu^+_{\mtwo}(C_0^+(f))}
{ 1+ \frac{\#^-}{\#^+}\cdot \frac{T^-_{\mtwo}}{T^+_{\mtwo}}+\frac{E}{\#^+ \cdot T^+_{\mtwo}}}\\
&\ge
\frac{1-\epsilon_0}
{1+(\frac{2}{w}-1)\cdot \frac{1}{\kappa}+\frac{2}{ w} \cdot \frac{1}{\kappa} }.
\end{aligned}
$$
Recall, $\epsilon_0<\frac{1}{10}$. Hence, by choosing $\kappa\ge 1$ large enough we can assure
that
$$
\tfrac{1}{t} \#\{i<t\mid f^i(x)> \text{crit}(f)\}\ge \tfrac34.
$$

Finally, note that by looking at odd renormalization levels we could have
chosen the ratios $T_m^+/T_m^-$ small instead of large in the second part which
would lead to $P_6^-(\mthree)$ instead of $P_6^+(\mthree)$ in the fourth part.
In other words, by suitably modifying the above argument we can choose $D$
so that it satisfies $\mathcal{P}^-(\mthree,\epsilon')$ instead of
$\mathcal{P}^+(\mthree,\epsilon')$.
\end{proof}

\begin{thm}\label{example} Every analytic monotone family of Lorenz maps has an infinitely renormalizable map $f$ which  satisfies
\begin{itemize}
\item $f$ is ergodic with respect to Lebesgue measure,
\item $\OO_f$ is the attractor,
\item $|\OO_f|=0$,
\item $\OO_f$ carries exactly two ergodic measures,
\item $f$ has no physical measure.
\end{itemize}
\end{thm}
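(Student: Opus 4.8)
The plan is to build $f$ as a point common to a nested sequence of full islands obtained by iterating Proposition~\ref{flipplus}, alternating between its $\mathcal{P}^+$ and $\mathcal{P}^-$ conclusions. First I would produce, by standard deep-renormalization estimates (the argument of Lemma~\ref{geobounds}), a base full island $D_0\subset\Dom(F)$ satisfying $\mathcal{P}(n_0,\epsilon_1)$ for some $n_0\ge1$ and some $0<\epsilon_1\le\epsilon_0$: since $F$ is monotone it is full (Theorem~\ref{monotonefull}), so it has a full island $D_{a,b}$ in a deep archipelago $A_{a,b}$, and for $a,b$ large the family $D_{a,b}\ni\lambda\mapsto\opR F_\lambda$ is analytic, full, $\epsilon_0$-close to standard maps and satisfies $P_3(1)$; flipping the critical point of a further renormalization as in the opening step of the proof of Proposition~\ref{flipplus} gives $P_4$ at some level $n_0$; and Lemma~\ref{C} together with Lemma~\ref{correspond} gives a neighborhood on which $P_5(n_0,\epsilon_1)$ holds. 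Shrinking to that neighborhood yields $D_0$.

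Next I would fix $\epsilon_j\downarrow\epsilon_\infty:=\epsilon_1/2>0$ and apply Proposition~\ref{flipplus} repeatedly: from $D_0$ (type $\mathcal{P}(n_0,\epsilon_1)$) obtain $D_1\subset D_0$ of type $\mathcal{P}^+(n_1,\epsilon_2)$, then $D_2\subset D_1$ of type $\mathcal{P}^-(n_2,\epsilon_3)$, and so on, getting $n_0<n_1<n_2<\cdots$ and $X_{n_{j+1}}(f)\subset X_{n_j}(f)$ for all $f\in D_{j+1}$. The islands are compact and nested, so I pick $f\in\bigcap_j D_j$. Then $f$ is $n_j$ times renormalizable for every $j$, hence infinitely renormalizable of some combinatorial type $\{(a_k,b_k)\}$, and since $n_j\to\infty$ property $P_1$ gives $a_k,b_k\ge K_k$ for every $k$.

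Now the five assertions follow. Lemma~\ref{ergnowand} gives the first three: $P_3(n_j)$ yields $|C_{n_j}(f)|/|G^\pm_{n_j}(f)|\le\epsilon_0$, so $\liminf_n|C_n(f)|/|G^\pm_n(f)|\le\epsilon_0$, whence $f$ is ergodic for Lebesgue measure, $\OO_f$ is the attractor and the limit set of a.e.\ point, and $|\OO_f|=0$. Lemma~\ref{ergodicmeasure} (applicable because $a_k,b_k\ge K_k$) gives that $\OO_f$ carries exactly the two ergodic measures $\mu^+,\mu^-$. For the last item, set $X=\bigcap_j X_{n_j}(f)$, which has Lebesgue measure $\ge\tfrac12+\epsilon_\infty>\tfrac12$. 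For $x\in X$: for each even $j$ we have $x\in X_{n_j}(f)$ and $D_j$ satisfies $P_6^+(n_j)$, so there is $t_j\ge n_j$ with $\tfrac1{t_j}\#\{i<t_j:f^i(x)>c\}\ge\tfrac34$; for each odd $j$, $P_6^-(n_j)$ gives $t_j\ge n_j$ with $\tfrac1{t_j}\#\{i<t_j:f^i(x)<c\}\ge\tfrac34$. Since $t_j\to\infty$, if the empirical measures $\tfrac1t\sum_{i<t}\delta_{f^i(x)}$ converged weak-$\star$ to some $\mu$, the Portmanteau theorem would give $\mu([c,1])\ge\tfrac34$ (from the even $t_j$) and $\mu([0,c])\ge\tfrac34$ (from the odd $t_j$); but such a $\mu$ is $f$-invariant (a.e.\ orbit avoids the countable backward orbit of $c$) with $\mu(\OO_f)=1$ (a.e.\ orbit accumulates on $\OO_f$), hence a convex combination of the non-atomic measures $\mu^\pm$, so $\mu(\{c\})=0$ and $\mu([0,c])+\mu([c,1])=1$, contradicting $\mu([0,c]),\mu([c,1])\ge\tfrac34$. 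Thus the empirical measures diverge for every $x\in X$, and since $|X|>\tfrac12$, $f$ has no physical measure.

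The analytic content is essentially all packaged in Proposition~\ref{flipplus}, so the theorem is close to a formal corollary; the delicate points are the bootstrapping of the base island $D_0$ (a routine but slightly fussy deep-renormalization argument) and the verification in the last paragraph that oscillating time averages on a set of positive Lebesgue measure genuinely preclude a physical measure, which rests on the non-atomicity of $\mu^\pm$ and on the fact that a hypothetical physical measure must be carried by $\OO_f$.
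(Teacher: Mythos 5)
Your construction mirrors the paper's proof: bootstrap a base full island satisfying $\mathcal{P}(\cdot,\cdot)$ using Theorem~\ref{monotonefull}, Lemma~\ref{geobounds}, Lemma~\ref{C} and Lemma~\ref{correspond}; alternate Proposition~\ref{flipplus} to produce a nested sequence of full islands with $\mathcal{P}^+$ and $\mathcal{P}^-$; pick $f$ in the intersection; invoke Lemma~\ref{ergnowand} and Lemma~\ref{ergodicmeasure} for the first four bullets; and use the $P_6^\pm$ oscillation on $X(f)=\bigcap X_{n_j}(f)$ for the last one. Taking $\epsilon_j\downarrow\epsilon_1/2$ rather than the paper's $\epsilon_N=\epsilon_0/N$ is a harmless variant.

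There is, however, a circularity in the last paragraph that you should repair. Having derived from Portmanteau that a hypothetical weak-$\star$ limit $\mu$ would satisfy $\mu([0,c])\ge\tfrac34$ and $\mu([c,1])\ge\tfrac34$, hence $\mu(\{c\})\ge\tfrac12$, you want a contradiction from $\mu(\{c\})=0$, and you get it by claiming $\mu$ is $f$-invariant and supported on $\OO_f$, hence a convex combination of the non-atomic $\mu^\pm$. But the assertion that $\mu$ is $f$-invariant does not follow from the orbit of $x$ avoiding the backward orbit of $c$: to pass from $\int g\circ f\,d\mu_t\to\int g\circ f\,d\mu$ one needs $g\circ f$ to be continuous $\mu$-a.e., i.e.\ $\mu(\{c\})=0$ — exactly what you are trying to prove. (In fact, if $\mu(\{c\})>0$ then $f_*\mu$ is not even well defined, since $f$ is undefined at $c$.) The fix is to establish $\mu(\{c\})=0$ directly, without invariance: for a.e.\ $x$ the orbit eventually enters $\inter C_n$, and thereafter consecutive visits to $C_n$ are at least $\min(T_n^-,T_n^+)$ apart, so $\limsup_t \mu_t(C_n)\le 1/\min(T_n^-,T_n^+)$; since $\inter C_n$ is open and $c\in\inter C_n$, Portmanteau gives $\mu(\{c\})\le\mu(\inter C_n)\le\liminf_t\mu_t(\inter C_n)\le 1/\min(T_n^-,T_n^+)\to 0$. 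With this you already have $\mu([0,c])+\mu([c,1])=1<\tfrac32$, a contradiction, and you do not need the classification of invariant measures on $\OO_f$ at all. The paper itself jumps straight from the $\limsup/\liminf$ estimate to ``no physical measure''; you were right to notice that something more must be said — the particular argument just needs a non-circular source for non-atomicity at $c$.
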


\begin{proof}  Let $\epsilon_N=\frac1N\epsilon_0$. Without loss of generality we may assume that $D\hat{f}(0), D\hat{f}(1)>1$, where $\hat{f}$ is the full map in the family. Apply Theorem \ref{monotonefull}, Lemma \ref{geobounds} and Proposition \ref{cprime}. This gives a full island $D_1$ which satisfies the properties $P_1(1)$, $P_2(1)$, $P_3(1)$, and $P_4(1)$. 
Let $\hat{f}_1$ be the full map in $D_1$ and $U_1\subset D_1$ a small neighborhood of $\hat{f}_1$. We can choose a set $X_1(\hat{f}_1)$ which is a finite collection of pairwise disjoint monotone preimages  of $C_1(\hat{f})$ such that
$|X_1(\hat{f}_1)|\ge \frac12+2\epsilon_1$. Then for maps $f\in U_1\subset D_1$ we let $X_1(f)$ be the corresponding collection of monotone preimages of $C_1(f)$. If the neighborhood $U_1\subset D_1$ is chosen small enough, all the maps in $U_1$ will also satisfy $P_5(1,\epsilon_1)$.  Hence $D_1$ satisfies $\mathcal P(1,\epsilon_1)$.

By repeatedly applying Proposition~\ref{flipplus} we construct a sequence of
full islands $D_1\supset D_2\supset D_3 \supset  \cdots$ such that for $N\geq2$
$$
D_N \text{ satisfies } \mathcal{P}^+(m_N,\epsilon_N) \text{ for } N \text{ even,}
$$
and 
$$
D_N \text{ satisfies } \mathcal{P}^-(m_N,\epsilon_N)  \text{ for } N \text{ odd,}
$$
with $m_N\to \infty$. For example, given $D_{2N}$ which satisfies
$\mathcal{P}^+(m_{2N},\epsilon_{2N})$, apply Proposition \ref{flipplus} with $\epsilon=\epsilon_{2N+1}$ to obtain $D_{2N+1}$ which satisfies $\mathcal{P}^-(m_{2N+1},\epsilon_{2N+1})$, etc.

 Let 
$$
f\in \bigcap D_N.
$$
From Lemma \ref{ergnowand} and property $P_3(m_N)$ we know that $f$ is ergodic with attractor the Cantor set $\OO_f$, and the attractor has measure zero. From Lemma \ref{ergodicmeasure} we know that $\OO_f$ carries exactly two ergodic measures. Let 
$$
X(f)=\bigcap X_{m_N}(f).
$$
The construction implies $|X(f)|\ge \frac12$. Moreover, for every $x\in X(f)$ we have
$$
\limsup \frac{1}{t} \# \{i< t| f^i(x)>\text{crit}(f)\}\ge \frac34,
$$
and
$$
\liminf \frac{1}{t} \# \{i< t| f^i(x)>\text{crit}(f)\}\le \frac14.
$$
Hence, $f$ does not have a physical measure.
\end{proof}

\begin{rem} The construction used to prove Theorem \ref{example} leaves enough freedom. In particular, one can construct a Cantor set of maps in the given family which all satisfy the properties mentioned in the theorem.
\end{rem}

\end{document}